\newtheorem{theorem}{Theorem}[section]
\newtheorem{proposition}[theorem]{Proposition}
\newtheorem{lemma}[theorem]{Lemma}
\newtheorem{definition}[theorem]{Definition}
\theoremstyle{definition}
\newenvironment{remark}
  {\pushQED{\qed}\remarkx}
  {\popQED\endremarkx}
\numberwithin{equation}{section}
\DeclareMathOperator{\dd}{d}            % integration measure
\DeclareMathOperator{\e}{e}             % basis of exponential
\DeclareMathOperator{\Var}{Var}         % variance
\newcommand{\jbrak}[1]{\langle#1\rangle}
\newcommand{\eps}{\varepsilon} % another example of author macro
\newcommand{\6}[1]{\dd\!#1}             % integration measure
\newcommand{\field}[1]{\mathbb{#1}}
\newcommand{\Z}{\field{Z}}    % integers
\newcommand{\R}{\field{R}}    % real numbers
\newcommand{\E}{\field{E}}      % expectation
\newcommand{\fP}{\field{P}}     % probability
\newcommand{\T}{\field{T}}      % torus
\newcommand{\cB}{\mathcal{B}}   % calligraphic B
\newcommand{\cC}{\mathcal{C}}   % calligraphic C
\newcommand{\cO}{\mathcal{O}}   % calligraphic O
\begin{document}

%%%%%%%%%%%%%%%%%%%%%%%%%%%%%%%%%%%%%%%%%%%%%%%%%%%%%%%%%%%%%%%%%%%%%%%%%%%%%%

\title{Concentration estimates for SPDEs\\ driven by fractional Brownian motion}
\author{Nils Berglund, Alexandra Blessing (Neam\c tu)}
\date{}   

\maketitle

\begin{abstract}
The main goal of this work is to provide sample-path estimates for 
the solution of slowly time-dependent SPDEs perturbed by a cylindrical fractional 
Brownian motion. Our strategy is similar to the approach by Berglund and Nader 
for space-time white noise. However, the setting of fractional Brownian motion 
does not allow us to use any martingale methods. Using instead optimal estimates 
for the probability that the supremum of a Gaussian process exceeds a certain level, 
we derive concentration estimates for the solution of the SPDE, provided that the Hurst 
index $H$ of the fractional Brownian motion satisfies $H>\frac14$. As a by-product, 
we also obtain concentration estimates for one-dimensional fractional SDEs valid 
for any $H\in(0,1)$.
\end{abstract}

\leftline{\small{\it Date.\/} April 25, 2024. Revised version from December 29, 2024.}
\noindent{\small 2020 {\it Mathematical Subject Classification.\/} 
60G15, 60G17, 60H15.
}

\noindent{\small{\it Keywords and phrases.\/}
Concentration estimates, slow-fast systems, fractional Brownian motion, SPDEs.
}  

\section{Introduction}

Fractional Brownian motion (fBm) is a famous example of stochastic process used in 
order to model memory effects or long-range dependencies. An fBm is a centered, stationary 
Gaussian process parameterized by a so-called Hurst index/parameter $H\in(0,1)$. For 
$H = \frac12$, one recovers the classical Brownian motion. However, for $H \in (\frac12, 1)$ 
and $H \in (0, \frac12)$, fBm exhibits a different behaviour than Brownian motion. 
Its increments are no longer independent, but positively correlated for $H > \frac12$, 
and negatively correlated for $H < \frac12$. Fractional Brownian motion has been used to model 
a wide range of phenomena, extending from mathematical finance~\cite{Rogers} to fluid 
dynamics~\cite{Weiss13}.

Since fractional Brownian motion is neither a Markov process, nor a semi-martingale, it is a 
challenging task to construct solutions of SDEs/SPDEs driven by such a process and to analyze 
their dynamical properties~\cite{HaLi,HaOh}. Here we contribute to this topic by deriving  
concentration estimates for slowly time-dependent SDEs perturbed by an additive fBm for all 
ranges of the Hurst index $H\in(0,1)$, and for slowly time-dependent semilinear SPDEs perturbed 
by a cylindrical fBm, provided $H\in(\frac14,1)$. 
In this case, the well-posedness of the SPDEs is well-known~\cite{DuncanMaslowski, DuncanMaslowski2,DMD3,TTV}. We mention that concentration estimates for SDEs driven by fBm with $H\in(\frac12,1)$ were 
previously obtained in~\cite{EKN}. One of the main novelties of this work is to extend this 
finite-dimensional result to $H\in(0,\frac12)$. The tools used in~\cite{EKN}, based on 
results in~\cite{D}, break down in this case, since they require the covariance of the 
fBm to be increasing, and rely on Lyapunov-type equations for the variance of a non-autonomous 
fractional Ornstein--Uhlenbeck process.~The SDEs we consider in this work have the form
\begin{equation}
\label{eq:SDE_intro} 
\6x_t = f(\eps t,x_t) \6t + \sigma \6W^H_t\;, 
\end{equation} 
where the drift term $f$ admits a so-called stable uniformly hyperbolic critical manifold, that is, a smooth curve on which $f$ vanishes and has a uniformly negative $x$-derivative. In the deterministic case $\sigma = 0$, it is well known \cite{Tihonov,Fenichel} that the equation admits a so-called slow solution, staying $\eps$-close to such a curve, and attracting nearby solutions exponentially fast. In the Brownian case $H=\frac12$, it was shown in~\cite{BG02} that for $\sigma>0$, sample paths are concentrated in a neighbourhood of size of order $\sigma$ of such a slow solution. Our first main result, Theorem~\ref{c:sde:nl}, provides similar concentration estimates for any $H\in(0,1)$, with explicit bounds on the probability of leaving such a neighbourhood. We believe that these results can be extended to more general Gaussian processes satisfying a self-similarity property. The SPDEs we consider have the form 
\begin{equation}
\label{eq:spde_intro}
 \6\phi(t,x) =  \bigl[\Delta \phi(t,x) + f(\eps t,\phi(t,x))\bigr]\6t 
 + \sigma \6W^H(t,x)\;,
\end{equation}
where $x$ belongs to the one-dimensional torus $\T$, and $W^H(t,x)$ denotes a cylindrical fractional 
Brownian motion on $\T$. In the Brownian case $H=\frac12$, concentration estimates near stable uniformly hyperbolic critical manifolds have been obtained in~\cite{BN23} for the one-dimen\-sional torus, 
and in~\cite{BN:22x} for the two-dimen\-sional torus $\T^2$, provided the equation is suitably renormalised. 
Our second main result, Theorem~\ref{thm:nonlinear_concentration}, extends the concentration results 
on $\T$ to all $H\in(\frac14,1)$, for all fractional Sobolev norms of index $s\in(0,2H-\frac12)$. 
 
Numerous extensions and applications of these results are imaginable. For instance, one could investigate bifurcations in SDEs/SPDEs with fractional noise. For example~\cite{BlBl23} analyzes pitchfork bifurcations using finite-time Lyapunov-exponents and approximations with amplitude equations derived in~\cite{NBl}, whereas~\cite{LKN} computes early-warning signs for fast-slow systems perturbed by additive fractional noise. These results have been used in~\cite{LKN} to derive scaling laws for bifurcations for the Stommel--Cessi model for the Atlantic Meridional Overturning Circulation~\cite{Stommel,Cessi}. In the Brownian case $H=\frac12$, the works~\cite{BN23,BN:22x} have obtained similar results for pitchfork and avoided transcritical bifurcations in slowly time-dependent SPDEs. Another exciting direction is given by concentration estimates for SDEs/SPDEs perturbed by multiplicative fractional noise, using tools from rough path theory and slow-fast systems~\cite{HaLi}.

This manuscript is structured as follows. Section~\ref{sec:prelim} gives a precise definition of fBm, and provides a criterion allowing to estimate the probability that the supremum of a mean-square H\"older continuous Gaussian process exceeds a certain level. Section~\ref{sec:SDE} deals with sample-path estimates for one-dimensional SDEs. The key result in this setting is a suitable upper bound on the variance of a non-autonomous fractional Ornstein--Uhlenbeck process. The concentration inequality is extended to semilinear SPDEs in 
Section~\ref{sec:SPDE}, by analyzing the Fourier components of its solution and using Schauder-type estimates. 

\section{Fractional Brownian motion}
\label{sec:prelim} 

In this section we collect basic results on fractional Brownian motion and Gaussian 
processes which will be required later on. 

\begin{definition}
A fractional Brownian motion (fBm) $(W^{H}_t)_{t\geq 0}$ with Hurst index $H\in(0,1)$ is a centered Gaussian process with covariance 
\begin{equation}
\label{eq:cov_fBm} 
\E [W^H_tW^H_s]=\frac{1}{2} (t^{2H} +s^{2H} -|t-s|^{2H})\;.
 \end{equation}
\end{definition}

For $H=\frac12$ we recover the standard Brownian motion, whereas for $H\neq \frac12$ we obtain a process which is neither Markov nor a semi-martingale. 

A useful estimate for the probability that the supremum of a Gaussian process exceeds a certain threshold is given by the following theorem~\cite[Theorem D.4]{P}. It is obtained by comparing the probability of exceeding a certain level with the one of a suitable stationary process, using Slepian's lemma~\cite{Slepian}. The mean-square H\"older continuity  enables one to define such a process. %whose proof 

\begin{theorem}
\label{gaussineqforHolderCont}
Let $(X_t)_t$ be a continuous Gaussian process with zero mean on $[0,T]$ for $T>0$.
Assume that $(X_t)_t$ is mean-square H\"older continuous, i.e., there are constants $G$ and $\gamma$ such that
\begin{equation}
\E \left[ (X_t - X_s)^2 \right] \leq G \left| t - s \right|^{\gamma} \hspace{0.2cm} 
\qquad \text{for all }t,s \in [0,T]\;.
\end{equation}
Then there exists a constant $K := K(G,\gamma)$ such that for $c > 0$ and $A \subset [0,T]$, 
one has 
\begin{equation}
\mathbb{P} \left\{ \sup_{t \in A} X_t > c\right\} 
\leq K T c^{2/\gamma}\exp\left\{- \frac{c^2}{2\sigma^2(A)} \right\}\;,
\end{equation}
where $\sigma^2(A) := \sup_{t \in A} \text{\em Var} \left( X_t\right)$.
\end{theorem}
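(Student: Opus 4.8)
The plan is to prove the bound by discretisation and a union bound, which is the mechanism underlying the comparison with a stationary process via Slepian's lemma alluded to after the statement. Throughout write $\sigma^2 := \sigma^2(A) = \sup_{t\in A}\Var(X_t)$, and note that we may assume $\gamma \le 2$, since a larger mean-square Hölder exponent would force $X$ to be a.s.\ constant.

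\emph{Step 1: reduction to a finite grid.} Fix a mesh $\delta > 0$ to be chosen, put $t_k = k\delta$ for $0 \le k \le N$ with $N \asymp T/\delta$, and for each $t\in A$ let $t_k$ be a grid point with $|t - t_k| \le \delta$. For any $r \in (0,c)$ one has the inclusion $\{\sup_{t\in A} X_t > c\} \subseteq \{\max_k X_{t_k} > c - r\} \cup \{\max_k O_k > r\}$, where $O_k := \sup_{|s - t_k| \le \delta} (X_s - X_{t_k})$ is the oscillation of $X$ over the $k$-th cell: indeed, if $X_t > c$ for some $t$ in cell $k$, then either $O_k > r$, or $X_{t_k} \ge X_t - O_k > c - r$.

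\emph{Step 2: the two terms.} For the discrete maximum, a union bound together with the Gaussian tail estimate $\mathbb{P}\{X_{t_k} > a\} \le \tfrac12 \e^{-a^2/2\Var(X_{t_k})} \le \tfrac12 \e^{-a^2/2\sigma^2}$ (valid because $\Var(X_{t_k}) \le \sigma^2$ and $a>0$) gives $\mathbb{P}\{\max_k X_{t_k} > c-r\} \le \tfrac12 (N+1)\e^{-(c-r)^2/2\sigma^2}$. For the oscillation term, each $O_k$ is the supremum of a centered Gaussian process whose canonical-metric diameter is at most $\sqrt{G}\,\delta^{\gamma/2}$ by mean-square Hölder continuity; Dudley's entropy bound gives $\E[O_k] \le C_\gamma \sqrt{G}\,\delta^{\gamma/2}$, and the Borell--TIS inequality yields the sub-Gaussian tail $\mathbb{P}\{O_k > r\} \le \exp\{-(r - C_\gamma\sqrt{G}\,\delta^{\gamma/2})^2 / (2 G\delta^\gamma)\}$ for $r > \E[O_k]$, so that $\mathbb{P}\{\max_k O_k > r\}$ is bounded by $(N+1)$ times this quantity. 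Choosing $\delta \asymp (G c^2)^{-1/\gamma}$, so that the per-cell fluctuation $\sqrt{G}\,\delta^{\gamma/2}$ is of order $1/c$, produces exactly $N \asymp T G^{1/\gamma} c^{2/\gamma}$ grid points, hence the announced prefactor $KTc^{2/\gamma}$ with the $G$-dependence absorbed into $K$.

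\emph{Main obstacle.} The crux is the choice of $r$ and the tracking of constants. The discrete-maximum term wants $r$ small --- of order $\sigma^2/c$ --- so that the cross-term in $(c-r)^2 = c^2 - 2rc + r^2$ only produces a bounded factor $\e^{rc/\sigma^2}$, whereas the oscillation term wants $r$ comfortably above the fluctuation scale $\sqrt{G}\,\delta^{\gamma/2} \asymp 1/c$; reconciling these two requirements, and showing that the resulting constant depends only on $(G,\gamma)$, is the delicate part, and it is exactly here that the non-constant variance of $X$ (the rate being governed by the supremal variance $\sigma^2(A)$) enters. This is also why the bound is informative in the large-deviation regime $c \gtrsim \sigma(A)$ relevant to the paper. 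The alternative advertised in the excerpt packages the same balancing abstractly: after normalising $X$ to unit variance one compares it, through Slepian's lemma, with a stationary Gaussian process whose covariance $\e^{-\lambda|\tau|^\gamma}$ is positive definite for $0<\gamma\le 2$ by Schoenberg's criterion, and invokes the classical Pickands/Qualls--Watanabe tail asymptotics for the supremum of a stationary process, which yield precisely the factor $Tc^{2/\gamma}\e^{-c^2/2\sigma^2}$.
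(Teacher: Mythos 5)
The paper never proves Theorem~\ref{gaussineqforHolderCont}: it is quoted from Piterbarg~\cite[Theorem D.4]{P}, with only the one-line remark that the proof compares $X$ with a suitable stationary process via Slepian's lemma. Your proposal is therefore a from-scratch reconstruction, and it has a genuine gap: the balancing problem you yourself flag under ``Main obstacle'' is never carried out, and in fact it cannot be carried out within the single-scale scheme you set up. Concretely, since polynomial prefactors cannot compete with the Gaussian factor as $c/\sigma(A)\to\infty$, the oscillation term can only be dominated by the target bound if its exponent beats $c^2/2\sigma^2(A)$, i.e.\ if $r\gtrsim \mathbb{E}[O_k]+\sqrt{G}\,\delta^{\gamma/2}\,c/\sigma(A)$, while the grid term forces $r\lesssim \sigma^2(A)/c$. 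With your choice $\delta\asymp(Gc^2)^{-1/\gamma}$, i.e.\ $\sqrt{G}\,\delta^{\gamma/2}\asymp 1/c$, these requirements read (in the normalisation $G=1$) $r\gtrsim 1/\sigma(A)$ and $r\lesssim\sigma^2(A)/c$, which are compatible only when $c\lesssim\sigma(A)^3$ --- precisely not the regime $c\gg\sigma(A)$ where the statement has content. Shrinking $\delta$ to repair this forces $\sqrt{G}\,\delta^{\gamma/2}\lesssim\sigma(A)^3/c^2$, hence a number of cells $N\gtrsim T\,G^{1/\gamma}\bigl(c^2/\sigma(A)^3\bigr)^{2/\gamma}$; the resulting prefactor has the wrong power of $c$ and an explicit dependence on $\sigma^2(A)$, which the statement excludes, since $K$ may depend only on $G$ and $\gamma$. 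One can trade this $\sigma(A)$-dependence back into the exponential, but only at the price of degrading the exponent to $c^2/\bigl((2+\epsilon)\sigma^2(A)\bigr)$, and the sharp constant $2$ is exactly what the paper needs downstream (it produces the constant $2H\Gamma(2H)$ in $\kappa(\eps)$ of Proposition~\ref{prop:lin:stable}).

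The obstruction is structural, not a matter of tuning constants. A union bound plus Borell--TIS prices the oscillation of each cell as an independent rare event at the fluctuation scale $\sqrt{G\delta^\gamma}$, whereas the prefactor $c^{2/\gamma}$ encodes a Pickands-type local lemma: on one interval of the critical length $\bigl(\sigma^2(A)/G\bigr)^{1/\gamma}\bigl(\sigma(A)/c\bigr)^{2/\gamma}$, the supremum exceeds $c$ with probability at most a constant multiple of the single-point tail $\exp\{-c^2/2\sigma^2(A)\}$. Your own framework shows Borell--TIS at a single scale can never produce this: over such an interval it would require $r\ge\mathbb{E}[O_k]+\sigma(A)$ together with $r\lesssim\sigma^2(A)/c$. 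The local lemma genuinely requires the double-sum method or the Slepian comparison with a stationary process of covariance $\e^{-\lambda|\tau|^\gamma}$ --- the route the paper's sketch and your closing paragraph both invoke. But that paragraph is an advertisement, not an argument: among other things, Slepian's lemma requires equal variances, so the non-constant variance of $X$ must first be handled (e.g.\ via $X_t/\sigma(t)$, legitimate on the event $\{X_t>c>0\}$ because $\sigma(t)\le\sigma(A)$, followed by control of the increments of the normalised process), and none of this is done. As it stands, the proposal correctly identifies the difficulty and the right tools, but what it actually proves is strictly weaker than the claimed bound.
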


\begin{remark}
\label{scale:GK}
By a simple scaling argument, one can infer that 
\begin{equation}
 K(G,\gamma) = G^{-1/\gamma} K(1,\gamma) =: G^{-1/\gamma} K_0(\gamma)\;.
\end{equation} 
Indeed, the process $\tilde X_t = G^{-1/2} X_t$ has H\"older constant $1$,  
maximal variance $G^{-1} \sigma^2(A)$, and satisfies 
$\sup_{t\in A}\tilde X_t = G^{-1/2}\sup_{t\in A}\tilde X_t$.
\end{remark}

In our case, we use this result for a non-autonomous fractional Ornstein-Uhlenbeck process 
of Hurst index $H\in(0,1)$, which is known to be mean-square H\"older continuous with 
exponent $\gamma = 2H$. 
% The extension to all $H\in(0,1)$ is one of the main novelties of this work. 
% Indeed, the bound used in~\cite{EKN}, based on the results in~\cite{D}, requires that the covariance function of the fBm is increasing, which is not the case for $H\in(0,\frac12)$ . 

\section{The one-dimensional SDE case}
\label{sec:SDE} 

\subsection{Linear case}

We start by considering linear fractional SDEs, driven by an fBm $(W^H_t)_{t\geq 0}$ with 
Hurst parameter $H\in(0,1)$, given by 
\begin{equation}
\label{eq:SDE_linear_fast} 
\6x_t = a(\eps t) x_t \6t + \sigma \6W^H_t\;, 
\end{equation} 
where $\eps, \sigma>0$ are small parameters, and $a:[0,T]\to\R$ is of class $\cC^1$.
It will be convenient to scale time by a factor $\eps$, which turns the SDE~\eqref{eq:SDE_linear_fast}
into 
\begin{equation}
\label{eq:SDE_linear} 
\6x_t = \frac{1}{\eps} a(t) x_t \6t + \frac{\sigma}{\eps^{H}} \6W^H_t\;.
\end{equation} 
We will assume that the function $a$ satisfies 
\begin{equation}
\label{eq:assump_a} 
 a(t) \leq -a_0\;, \qquad 
 |a'(t)| \leq a_1 \qquad 
 \forall t\in[0,T]
\end{equation} 
for some constants $a_0, a_1 > 0$, and write 
\begin{equation}
\alpha(t) = \int_0^t a(s)\6s\;, \qquad 
\alpha(t,u) = \int_u^t a(s)\6s\;.
\end{equation} 
In order to apply Theorem~\ref{gaussineqforHolderCont}, we will need to control the 
variance of $x_t$. The following result is an adaptation of~\cite[Theorem~1.43]{Mishura} to the non-autonomous case.

\begin{lemma}\label{variance}
Assume the initial condition $x_0$ in~\eqref{eq:SDE_linear} is deterministic. 
For any $H\in(0,1)$, the variance of $x_t$ satisfies the upper bound
\begin{equation}\label{eq:variance1}
 \Var(x_t) \leq \frac{2H\sigma^2}{\eps^{2H}}
 \int_0^t 
 \Bigl[ \e^{\alpha(t,s)/\eps}   (t-s)^{2H-1} 
 - \e^{\alpha(t)/\eps} (1 - \e^{\alpha(t,s)/\eps}) s^{2H-1} \Bigr]  \6s\;.
\end{equation} 
\end{lemma}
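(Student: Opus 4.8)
The plan is to solve \eqref{eq:SDE_linear} explicitly and to reduce the statement to an elementary comparison of two integrals. Since the noise is additive, the variation-of-constants formula gives
\[
 x_t = \e^{\alpha(t)/\eps}x_0 + \frac{\sigma}{\eps^{H}}\int_0^t \e^{\alpha(t,s)/\eps}\6{W^H_s}\;,
\]
so that, $x_0$ being deterministic, $\Var(x_t)=\frac{\sigma^2}{\eps^{2H}}\E[Y_t^2]$, where $Y_t:=\int_0^t g(s)\6{W^H_s}$ and $g(s):=\e^{\alpha(t,s)/\eps}$ (with $t$ fixed). I record the identities $g(t)=1$, $g(0)=\e^{\alpha(t)/\eps}$ and $g'(s)=-\eps^{-1}a(s)g(s)$, together with the key bound $0<g(s)\le1$ on $[0,t]$, which holds because \eqref{eq:assump_a} forces $\alpha(t,s)=\int_s^t a(r)\6r\le-a_0(t-s)\le0$.

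Next I would obtain an exact expression for $\E[Y_t^2]$. For $H>\tfrac12$ the fBm Wiener isometry reads $\E[Y_t^2]=H(2H-1)\int_0^t\int_0^t g(u)g(v)\,|u-v|^{2H-2}\6u\6v$; exploiting symmetry to restrict to $u<v$ and integrating the inner integral by parts once (using $g'=-\eps^{-1}ag$ and that $(v-u)^{2H-1}$ vanishes at $u=v$) produces $\E[Y_t^2]=T_1+T_2$ with
\[
 T_1:=2Hg(0)\int_0^t g(v)\,v^{2H-1}\6v\;,\qquad
 T_2:=-\frac{2H}{\eps}\iint_{0<u<v<t} g(u)g(v)\,a(u)\,(v-u)^{2H-1}\6u\6v\;.
\]
For $H\le\tfrac12$ the kernel $|u-v|^{2H-2}$ ceases to be integrable, so the same decomposition must instead be derived from the pathwise integration-by-parts representation $Y_t=W^H_t+\eps^{-1}\int_0^t a(s)g(s)W^H_s\6s$ together with the continuity of the covariance \eqref{eq:cov_fBm}; this is precisely the adaptation of \cite[Theorem~1.43]{Mishura} to the non-autonomous case, and making it rigorous is the main obstacle.

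Finally I would prove the inequality. The same integration by parts applied to the target shows $2H\int_0^t g(s)(t-s)^{2H-1}\6s=g(0)t^{2H}-\eps^{-1}\int_0^t a(s)g(s)(t-s)^{2H}\6s$, whence the right-hand side of \eqref{eq:variance1} equals $\frac{\sigma^2}{\eps^{2H}}(T_1+S)$ with $S:=-\eps^{-1}\int_0^t a(s)g(s)(t-s)^{2H}\6s\ge0$. It therefore remains to show $T_2\le S$. Rewriting $T_2$ by Fubini as $\frac{2H}{\eps}\int_0^t(-a(u))g(u)\bigl[\int_u^t g(v)(v-u)^{2H-1}\6v\bigr]\6u$ and using $g(v)\le1$ inside the bracket gives $\int_u^t g(v)(v-u)^{2H-1}\6v\le\int_u^t(v-u)^{2H-1}\6v=(t-u)^{2H}/(2H)$; since $-a(u)g(u)\ge0$ by \eqref{eq:assump_a}, integrating this pointwise estimate yields exactly $T_2\le S$, hence $\E[Y_t^2]\le T_1+S$, which is \eqref{eq:variance1}. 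The genuine difficulty is thus concentrated in the variance identity for small $H$, where the singular covariance kernel rules out a direct isometry and demands the integration-by-parts route.
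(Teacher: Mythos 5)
Your argument for $H\in(\tfrac12,1)$ is correct and complete: the Wiener isometry, the inner integration by parts giving $\E[Y_t^2]=T_1+T_2$, and the comparison $T_2\le S$ (via $0<g\le 1$ and $-a\,g\ge 0$ from \eqref{eq:assump_a}) are all valid; in fact the slack in your bound $T_2\le S$ is exactly the nonpositive term $-\tfrac{1}{2\eps^2}\int_0^t\int_0^t a(u)a(v)g(u)g(v)|u-v|^{2H}\,\6u\,\6v$ contained in the exact variance, so on that range your route and the paper's agree. The genuine gap is the case $H\in(0,\tfrac12]$: the lemma asserts \eqref{eq:variance1} for \emph{every} $H\in(0,1)$, and the extension below $H=\tfrac12$ is precisely the point of this lemma in the paper, yet for small $H$ you only name the route (pathwise integration by parts plus continuity of the covariance, ``the adaptation of Mishura'') and explicitly defer ``making it rigorous'' as ``the main obstacle''. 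That deferred step \emph{is} the content of the proof, so as written the proposal does not establish the statement on half of the claimed parameter range.

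What you are missing is that the step you defer is elementary, and that doing it first makes your case distinction unnecessary. Starting, for every $H\in(0,1)$, from the pathwise identity you already wrote down, $Y_t=W^H_t+\eps^{-1}\int_0^t a(s)g(s)W^H_s\,\6s$ (valid path by path by Riemann--Stieltjes integration by parts), the variance becomes
\begin{equation}
\E[Y_t^2]=\E[(W^H_t)^2]+\frac{2}{\eps}\int_0^t a(s)g(s)\,\E[W^H_tW^H_s]\,\6s
+\frac{1}{\eps^2}\int_0^t\!\!\int_0^t a(u)a(v)g(u)g(v)\,\E[W^H_uW^H_v]\,\6u\,\6v\;,
\end{equation}
a finite combination of ordinary Lebesgue integrals of the \emph{continuous} kernel \eqref{eq:cov_fBm}; no isometry and no singular kernel $|u-v|^{2H-2}$ ever enters, so nothing degenerates at $H=\tfrac12$. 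Substituting \eqref{eq:cov_fBm}, the contribution of $-|u-v|^{2H}$ in the double integral is $\le 0$ (since $a(u)a(v)>0$ by \eqref{eq:assump_a}) and can be dropped, and the remaining terms reduce, by the very same integrations by parts you use elsewhere (namely $a\,g=-\eps g'$ and $t^{2H}=2H\int_0^t s^{2H-1}\6s$), to the right-hand side of \eqref{eq:variance1}. This is exactly the paper's proof, and it treats all $H\in(0,1)$ uniformly; your isometry computation for $H>\tfrac12$ is then a detour that recovers the same decomposition $T_1+T_2$ on only part of the range. To repair your write-up, replace the two-case structure by this single covariance computation and keep your final comparison argument (or, equivalently, drop the negative double integral directly).
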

\begin{proof}
The proof is based on the representation 
\begin{equation}
 x_t = x_0\e^{\alpha(t)/\eps} + \frac{\sigma}{\eps^H} 
 \int_0^t \frac{a(s)}{\eps} \e^{\alpha(t,s)/\eps} W^H(s)\6s 
 + \frac{\sigma}{\eps^H} W^H(t)
\end{equation} 
obtained by integration by parts. Since $x_0\e^{\alpha(t)/\eps}$ is deterministic, we obtain 
\begin{align}
\label{variance:non}
\Var(x_t) = \frac{\sigma^2}{\varepsilon^{2H}}\Bigl[& \E[(W^H(t))^2] 
+ 2 \int_0^t \frac{a(s)}{\varepsilon}\e^{\alpha(t,s)/\varepsilon} \E[W^H(t)W^H(s)]\6s \\
&{}+  
\int_0^t \int_0^t \frac{a(u)}{\varepsilon} \frac{a(v)}{\varepsilon} 
\e^{\alpha(t,u)/\varepsilon}\e^{\alpha(t,v)/\varepsilon}\E[W^H(u)W^H(v)]~\6u~\6v \Bigr]\;. 
\end{align}
By the expression~\eqref{eq:cov_fBm} of the covariance function of the fBm, the first term in square brackets gives $\E[(W^H(t))^2]=t^{2H}$, while the second term becomes
\begin{equation}
2\int_0^t \frac{a(s)}{\varepsilon}\e^{\alpha(t,s)/\varepsilon} \E[W^H(t)W^H(s)]\6s
= \int_0^t \frac{a(s)}{\varepsilon}\e^{\alpha(t,s)/\varepsilon} [t^{2H} +s^{2H}-(t-s)^{2H}]\6s\;.
\end{equation}
We split this into three integrals that we compute separately. The first one gives  
\begin{equation}
I_1 := t^{2H} \int_0^t \frac{a(s)}{\varepsilon} \e^{\alpha(t,s)/\varepsilon}~\6 s
= t^{2H}[\e^{\alpha(t)/\varepsilon}-1]\;, 
\end{equation}
while the other two integrals can be evaluated using integration by parts, yielding 
\begin{align}
I_2 &:= \int_0^t \frac{a(s)}{\varepsilon} \e^{\alpha(t,s)/\varepsilon} s^{2H}~\6s 
=-t^{2H} + 2H \int_0^t \e^{\alpha(t,s)/\varepsilon}s^{2H-1}~\6s\;,\\
I_3 &:= -\int_0^t \frac{a(s)}{\varepsilon}\e^{\alpha(t,s)/\varepsilon}(t-s)^{2H}~\6s 
=-\e^{\alpha(t)/\varepsilon} t^{2H} + 2H \int_0^t \e^{\alpha(t,s)/\varepsilon} (t-s)^{2H-1}~\6s\;.
\end{align}
We further split the last term in the expression~\eqref{variance:non} for the variance of $x_t$ into three integrals. By symmetry, the first two are equal and add up to  
\begin{align}
I_4 : ={}& 
\int_0^t\int_0^t \frac{a(u)}{\varepsilon}\frac{a(v)}{\varepsilon}\e^{\alpha(t,u)/\varepsilon}\e^{\alpha(t,v)/\varepsilon}u^{2H}~\6u~\6v \\
={} & t^{2H} -t^{2H}\e^{\alpha(t)/\varepsilon} + 2H (\e^{\alpha(t)/\varepsilon}-1) 
\int_0^t \e^{\alpha(t,s)/\varepsilon}s^{2H-1}~\6s\;.
\end{align}
The last integral, given by
\begin{equation}
-\int_0^t\int_0^t \frac{a(u)}{\varepsilon} \frac{a(v)}{\varepsilon} \e^{\alpha(t,u)/\varepsilon}
\e^{\alpha(t,v)/\varepsilon}|u-v|^{2H}~\6u~\6v\;,
\end{equation}
can be dropped since $a$ is negative. The result follows by exploiting cancellations, 
and writing the factor $t^{2H}$ occurring in one remaining term $-t^{2H}\e^{\alpha(t)/\eps}$ as the integral of $2Hs^{2H-1}$.
\end{proof}

Laplace asymptotics and our assumptions~\eqref{eq:assump_a} on $a$ allow us to obtain the following simplified expression for the variance for small $\eps$.  

\begin{lemma}
There exists a constant $r_1$, depending only on $a_0$, $a_1$ and $H$, such that for any 
$H\in(0,1)$ and any $t\geq0$, one has 
\begin{equation}
\label{estimate:variance}
\Var(x_t)\leq \frac{\sigma^2 2H\Gamma(2H)}{|a(t)|^{2H}} 
(1+r_1\varepsilon)\;. 
\end{equation}
\end{lemma}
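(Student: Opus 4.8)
The plan is to start from the upper bound \eqref{eq:variance1} and perform a Laplace-type analysis of the resulting integral. First I would observe that the second term in the integrand of \eqref{eq:variance1}, namely $-\e^{\alpha(t)/\eps}(1-\e^{\alpha(t,s)/\eps})s^{2H-1}$, is nonpositive: by \eqref{eq:assump_a} we have $\alpha(t,s)=\int_s^t a(r)\6r\leq 0$ and $\alpha(t)\leq 0$, so $\e^{\alpha(t)/\eps}>0$, $1-\e^{\alpha(t,s)/\eps}\geq 0$ and $s^{2H-1}\geq 0$. Discarding it only enlarges the bound, reducing the task to estimating
\[
J := \int_0^t \e^{\alpha(t,s)/\eps}(t-s)^{2H-1}\6s .
\]
This loss is harmless for the leading order, since the discarded term carries the factor $\e^{\alpha(t)/\eps}\leq\e^{-a_0 t/\eps}$ and is exponentially small.

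Next I would substitute $\tau=t-s$ and linearise the exponent around $s=t$. Writing $g(\tau):=\alpha(t,t-\tau)-a(t)\tau=\int_{t-\tau}^t(a(r)-a(t))\6r$, the bound $|a'|\leq a_1$ from \eqref{eq:assump_a} gives $|a(r)-a(t)|\leq a_1(t-r)$ and hence $|g(\tau)|\leq \tfrac12 a_1\tau^2$. The leading contribution comes from replacing $\alpha(t,t-\tau)$ by $a(t)\tau=-|a(t)|\tau$ and extending the integral to $[0,\infty)$; since extension only adds a positive tail,
\[
J \leq \int_0^\infty \e^{a(t)\tau/\eps}\tau^{2H-1}\6\tau + \int_0^t \e^{-|a(t)|\tau/\eps}\bigl(\e^{g(\tau)/\eps}-1\bigr)\tau^{2H-1}\6\tau,
\]
and the substitution $\xi=|a(t)|\tau/\eps$ evaluates the first integral exactly as $\Gamma(2H)(\eps/|a(t)|)^{2H}$, producing the claimed leading term $\frac{2H\sigma^2\Gamma(2H)}{|a(t)|^{2H}}$ after multiplication by the prefactor $\frac{2H\sigma^2}{\eps^{2H}}$.

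It then remains to show the error integral is of order $\eps^{2H+1}$, i.e. of relative order $\eps$. Here I would split at the fixed value $\delta:=a_0/a_1$, noting $\delta \leq |a(t)|/a_1$ since $|a(t)|\geq a_0$. On $\{\tau\leq\delta\}$ I would use $|\e^{g/\eps}-1|\leq \tfrac{|g|}{\eps}\e^{|g|/\eps}\leq\frac{a_1\tau^2}{2\eps}\e^{a_1\tau^2/(2\eps)}$ together with the bound $a_1\tau^2/2\leq \tfrac12|a(t)|\tau$, valid on this range, to absorb the growing exponential into half the linear decay rate; the integrand is then dominated by $\frac{a_1\tau^2}{2\eps}\e^{-|a(t)|\tau/(2\eps)}\tau^{2H-1}$, whose integral is of order $\eps^{-1}(\eps/|a(t)|)^{2H+2}$, which divided by the main term $(\eps/|a(t)|)^{2H}$ is $O(\eps)$ with a constant depending only on $a_0,a_1,H$ (using $|a(t)|\geq a_0$). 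On $\{\tau>\delta\}$ I would use the global bound $\alpha(t,t-\tau)\leq -a_0\tau$ and $|a(t)|\geq a_0$ to dominate the integrand by $2\e^{-a_0\tau/\eps}\tau^{2H-1}$, whose integral over $[\delta,\infty)$ is exponentially small in $\eps$ and hence negligible. Collecting the three contributions and factoring out $\frac{2H\sigma^2\Gamma(2H)}{|a(t)|^{2H}}$ yields \eqref{estimate:variance} with an explicit $r_1=r_1(a_0,a_1,H)$.

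The main obstacle I anticipate is precisely the near-region error control: because the quadratic correction $g(\tau)$ may be positive, $\e^{g/\eps}$ is not bounded by $1$ and grows like $\e^{a_1\tau^2/(2\eps)}$, so one cannot simply dominate the perturbed integrand by the unperturbed Gaussian one. The resolution is the choice of a fixed cutoff $\delta$ depending only on $a_0,a_1$: on $[0,\delta]$ the uniform lower bound $|a(t)|\geq a_0$ ensures the quadratic growth is dominated by half the linear decay rate, while on $[\delta,\infty)$ the uniform negativity $a\leq -a_0$ forces exponential smallness. These same uniform bounds on $a$ are what guarantee that the resulting constant $r_1$ is independent of $t$ (and of $\eps$, for $\eps$ small).
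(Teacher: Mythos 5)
Your proof is correct and follows essentially the same route as the paper: both drop the nonpositive second term in \eqref{eq:variance1}, compare the exponent $\alpha(t,t-\tau)$ with its linearisation $a(t)\tau$ using $|a'|\leq a_1$, split the integral at the fixed cutoff $a_0/a_1$, extract the leading term $\Gamma(2H)(\eps/|a(t)|)^{2H}$ by substitution, and show the far region is exponentially small via $\alpha(t,s)\leq -a_0(t-s)$. The only difference is that where the paper invokes a cited Laplace-asymptotics theorem to control the near-region quadratic correction, you give a self-contained elementary bound via $|\e^{x}-1|\leq|x|\e^{|x|}$ and absorption of the quadratic term into half the linear decay rate, which is a perfectly valid (and arguably more transparent) way to obtain the relative error of order $\eps$.
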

\begin{proof}
It is sufficient to bound the first term in the integrand in~\eqref{eq:variance1}, 
since the second term is negative. The assumptions~\eqref{eq:assump_a} on $a$ imply that 
whenever $0\leq s\leq t\leq T$, one has 
\begin{equation}
\alpha(t,s)\leq -a_0 (t-s)
\qquad\text{and}\qquad 
\alpha(t,s) \leq a(t)(t-s) + \frac12 a_1 (t-s)^2\;.
\end{equation}
Using the substitution $y=a(t)(t-s)$, we obtain 
\begin{equation}
\int_0^t \e^{\alpha(t,s)/\varepsilon}(t-s)^{2H-1}~\6s 
=\frac{1}{|a(t)|^{2H}} \int_0^{|a(t)| t} \e^{\alpha(t,t-\frac{y}{|a(t)|})/\varepsilon} y^{2H-1}~\6y\;.
\end{equation}
If $|a(t)| t > \frac{a_0}{a_1}$, we split the integral at $y = \frac{a_0}{a_1}$. To bound the integral 
over the interval $[0, \frac{a_0}{a_1}]$, we use the fact that 
\begin{equation}
 \alpha\Bigl(t,t-\frac{y}{|a(t)|}\Bigr)  
 \leq - y + \frac{a_1}{2a(t)^2}y^2
 \leq -y + \frac{a_1}{2a_0}y^2\;,
\end{equation}
to obtain 
\begin{align}
\int_0^{a_0/a_1} \e^{\alpha(t,s)/\varepsilon}(t-s)^{2H-1}~\6s  
&\leq \frac{1}{|a(t)|^{2H}} \int_0^{a_0/a_1} y^{2H-1} 
\exp\biggl\{-\frac{1}{\eps}\Bigl[y - \frac{a_1}{2a_0}y^2\Bigr]
\biggr\}~\6y 
\\
&=\frac{\varepsilon^{2H}}{|a(t)|^{2H}} \int_0^{a_0/\eps a_1} z^{2H-1} 
\exp\biggl\{-\Bigl[z - \eps\frac{a_1}{2a_0}z^2\Bigr]
\biggr\}~\6z\;,
\end{align}
where we have set $y=\eps z$. 
Without the term $-\eps\frac{a_1}{2a_0}z^2$, the integral would be bounded above 
by $\Gamma(2H)$. Using results on Laplace asymptotics, see for instance~\cite[Theorem~8.1]{Olver}, 
one obtains the upper bound $\Gamma(2H)[1 + \mathcal{O}(\eps)]$.

Using the fact that $\alpha(t,t-\frac{y}{|a(t)|}) \leq -y$, one finds that 
the integral over the remaining interval $[\frac{a_0}{a_1},|a(t)| t]$ 
is exponentially small in $\eps$, and therefore negligible with respect to the error of 
order $\eps$. Finally, if $|a(t)| t < \frac{a_0}{a_1}$, we can use the integral over 
$[0, \frac{a_0}{a_1}]$ as an upper bound.
\end{proof}

\begin{remark}
For $H=\frac12$, this result is consistent with the case of Brownian motion investigated by Berglund and Gentz in~\cite{BG02}. In particular, the $H$-dependent constant in~\eqref{estimate:variance} is given by $2H\Gamma(2H)=1$ for $H=\frac12$. 
\end{remark}

As a first consequence of the bound~\eqref{estimate:variance}, we obtain a concentration 
result for the solutions of the linear SDE~\eqref{eq:SDE_linear}. 
This is based on Theorem~\ref{gaussineqforHolderCont}, taking into account 
the scaling argument in Remark~\ref{scale:GK}.  

\begin{proposition}[Concentration estimate for the linear SDE]
\label{prop:lin:stable}
Assume $x_0 = 0$. Then there exists a constant $r_2$, depending only on $a_0$, $a_1$ and $H$, such that 
\begin{equation}
\label{eq:prop_linear_stable} 
 \fP\biggl\{ \sup_{0 \leq t \leq T} |x_t| |a(t)|^H \geq h \biggr\}
 \leq C\Bigl(T; \frac{h}{\sigma}, a_0\Bigr) 
 \exp\biggl\{ -\kappa(\eps)\frac{h^2}{2\sigma^2}\biggr\}\;,
\end{equation} 
where the prefactor and exponent are given by 
\begin{equation}
\label{eq:factors_linear_stable} 
 C\Bigl(T; \frac{h}{\sigma}, a_0\Bigr) = 
 \frac{2 K_0(2H)T^2}{a_0} \Bigl( \frac{h}{\sigma} \Bigr)^{1/H}\;, 
 \qquad 
 \kappa(\eps) = \frac{1 - r_2\eps}{2H\Gamma(2H)}\;.
\end{equation} 
\end{proposition}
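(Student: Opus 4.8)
The plan is to apply Theorem~\ref{gaussineqforHolderCont} to the rescaled process $Y_t := x_t\,|a(t)|^H$ on $A=[0,T]$. Since $x_0=0$, the solution $x_t$ of~\eqref{eq:SDE_linear} is a centered Gaussian process, and multiplying by the deterministic, strictly positive factor $|a(t)|^H$ keeps $Y_t$ a continuous centered Gaussian process. The event in~\eqref{eq:prop_linear_stable} is two-sided, so I would first reduce it to a one-sided supremum: by the symmetry $Y\overset{d}{=}-Y$ of a centered Gaussian process together with a union bound, $\fP\{\sup_{0\le t\le T}|Y_t|\ge h\}\le 2\,\fP\{\sup_{0\le t\le T}Y_t> h\}$, which is the origin of the prefactor $2$ in~\eqref{eq:factors_linear_stable}.

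Next I would check the two hypotheses of the theorem, a bound on the maximal variance and mean-square Hölder continuity. The variance is immediate: from $\Var(Y_t)=|a(t)|^{2H}\Var(x_t)$ and the estimate~\eqref{estimate:variance}, the factors $|a(t)|^{2H}$ cancel and one gets $\Var(Y_t)\le \sigma^2\,2H\Gamma(2H)(1+r_1\eps)$ uniformly in $t$, so that $\sigma^2(A)\le \sigma^2\,2H\Gamma(2H)(1+r_1\eps)$. Inserting this into the exponential factor of Theorem~\ref{gaussineqforHolderCont} with $c=h$ yields $\exp\{-h^2/[2\sigma^2\,2H\Gamma(2H)(1+r_1\eps)]\}$, and the elementary inequality $1/(1+r_1\eps)\ge 1-r_2\eps$, valid for a suitable $r_2\ge r_1$ and $\eps$ small, converts this into the claimed $\exp\{-\kappa(\eps)h^2/(2\sigma^2)\}$ with $\kappa(\eps)=(1-r_2\eps)/(2H\Gamma(2H))$.

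The mean-square Hölder continuity is the technical heart. I would start from the fact, recorded after Theorem~\ref{gaussineqforHolderCont}, that the non-autonomous fractional Ornstein--Uhlenbeck process $x_t$ is mean-square Hölder continuous with exponent $\gamma=2H$, and then absorb the effect of the weight by splitting $Y_t-Y_s=(x_t-x_s)|a(t)|^H+x_s\bigl(|a(t)|^H-|a(s)|^H\bigr)$. The first term is controlled by the Hölder bound for $x$ times the bounded factor $|a(t)|^{2H}$; for the second, the assumptions~\eqref{eq:assump_a} ensure that $|a|^H$ is $\cC^1$ with bounded derivative on $[0,T]$ (here $|a|\ge a_0>0$ is used), so $\bigl(|a(t)|^H-|a(s)|^H\bigr)^2\lesssim|t-s|^2\lesssim|t-s|^{2H}$ on $[0,T]$, while $\E[x_s^2]$ is bounded by the variance estimate. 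Combining the two contributions gives a constant $G$ with $\E[(Y_t-Y_s)^2]\le G|t-s|^{2H}$ on $[0,T]$.

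Finally I would assemble the pieces. By Theorem~\ref{gaussineqforHolderCont} and the scaling Remark~\ref{scale:GK} (with $\gamma=2H$, hence $2/\gamma=1/H$), the one-sided probability is bounded by $G^{-1/(2H)}K_0(2H)\,T\,h^{1/H}$ times the exponential above; multiplying by the factor $2$ from the two-sided reduction and tracking the value of $G^{-1/(2H)}$ — which carries all the remaining dependence on $\sigma$ and on $a_0,a_1$ — should collapse $G^{-1/(2H)}\,T$ to the factor $\tfrac{T^2}{a_0}\sigma^{-1/H}$ and thus reproduce the stated prefactor $C(T;h/\sigma,a_0)=\tfrac{2K_0(2H)T^2}{a_0}(h/\sigma)^{1/H}$. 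I expect this last bookkeeping to be the main obstacle: one must exhibit a constant $G$ that is at once a \emph{valid} mean-square Hölder constant for the weighted, non-stationary process $Y_t$ and simple enough that $G^{-1/(2H)}$ reduces, in the relevant small-$\eps$ regime, to the clean factor appearing in~\eqref{eq:factors_linear_stable}, together with the verification that $h^{1/H}\sigma^{-1/H}=(h/\sigma)^{1/H}$ combines correctly with the polynomial prefactor $c^{2/\gamma}$ of the theorem.
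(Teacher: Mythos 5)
Your proposal is correct, but it takes a genuinely different route from the paper's proof, so let me compare the two. The paper never forms the weighted process $Y_t=x_t|a(t)|^H$: it partitions $[0,T]$ into $N\approx T/\eps$ intervals $I_k=[k\eps,(k+1)\eps]$, on which the weight is constant up to $1+\cO(\eps)$, pulls the weight out as $\inf_{t\in I_k}|a(t)|^{-H}$, applies Theorem~\ref{gaussineqforHolderCont} (with $\gamma=2H$ and $G$ of order $\sigma^2/\eps^{2H}$) to the \emph{unweighted} process $x_t$ on each $I_k$, and sums; the stated prefactor $2K_0(2H)T^2a_0^{-1}(h/\sigma)^{1/H}$ is exactly what this union bound produces (about $T/\eps$ terms, each of order $\eps T a_0^{-1}(h/\sigma)^{1/H}$ by Remark~\ref{scale:GK}), and comparing $\inf_{I_k}|a|^{2H}$ with $\sup_{I_k}|a|^{2H}$ contributes a further $1-\cO(\eps)$ in the exponent that is absorbed into $r_2$. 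Your single global application of Theorem~\ref{gaussineqforHolderCont} to $Y_t$ avoids the partition entirely, and the two hypotheses are verified correctly: the Lipschitz property of $|a|^H$ uses precisely \eqref{eq:assump_a} (the lower bound $|a|\ge a_0$ to control the derivative of $u\mapsto u^H$, and $|a'|\le a_1$), the term involving $x_s(|a(t)|^H-|a(s)|^H)$ contributes only $\cO(\sigma^2T^{2-2H})|t-s|^{2H}$, which is negligible against $\sup_t|a(t)|^{2H}G_x\sim\sigma^2/\eps^{2H}$, and the uniform variance bound $\Var(Y_t)\le\sigma^2\,2H\Gamma(2H)(1+r_1\eps)$ gives the exponent with no extra error terms (in fact $r_2=r_1$ works, since $(1+x)^{-1}\ge1-x$ for $x\ge0$). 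One caveat: you need the H\"older bound for $x_t$ with $G_x$ of order $\sigma^2/\eps^{2H}$ to hold for \emph{all} pairs $s,t\in[0,T]$, not just nearby ones as in the paper's localized use; this is true, because for $|t-s|\ge\eps$ the increment variance is $\cO(\sigma^2)\le\cO(\sigma^2)(|t-s|/\eps)^{2H}$. The step you flag as the main obstacle is actually not one, but your expectation there is miscalibrated: you will not \emph{reproduce} the stated prefactor, you will beat it. With $G_Y\sim\sup_t|a(t)|^{2H}\sigma^2/\eps^{2H}$, Remark~\ref{scale:GK} yields a prefactor of order $\eps T(h/\sigma)^{1/H}$, which is smaller than $T^2a_0^{-1}(h/\sigma)^{1/H}$ once $\eps\le\eps_0(a_0,a_1,H,T)$; since \eqref{eq:prop_linear_stable} is an upper bound, it then follows a fortiori, and the smallness restriction on $\eps$ is harmless, being implicit in the form of $\kappa(\eps)$ and in the paper's own $1+\cO(\eps)$ estimates. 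In exchange, the paper's localization never needs to differentiate the weight and requires nothing beyond its near-constancy on $\eps$-intervals, which is why it generalizes verbatim to the per-mode rescaling used in the SPDE section.
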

\begin{proof}
We introduce a partition $0 = t_0 < t_1 < \dots < t_N = T$ of $[0,T]$ given by 
$t_k = k\eps$ for $0 \leq k \leq N-1 = \lfloor T/\eps \rfloor$, and write 
$I_k = [t_k, t_{k+1}]$ for the $k$th interval in the partition. Then the probability on the left-hand 
side of~\eqref{eq:prop_linear_stable} is bounded by 
\begin{align}
\sum_{k=0}^{N-1} &\fP\biggl\{ \sup_{t\in I_k} |x_t| |a(t)|^H \geq h \biggr\} 
\leq \sum_{k=0}^{N-1} \fP\biggl\{ \sup_{t\in I_k} |x_t| \geq h \inf_{t\in I_k}
\frac{1}{|a(t)|^H}\biggr\} \\
&\leq 
2K_0(2H)\eps T \Bigl( \frac{h}{\sigma} \Bigr)^{1/H}
\sum_{k=0}^{N-1} \inf_{t\in I_k} \frac{1}{|a(t)|} 
\exp\biggl\{ -\frac{h^2}{2} 
\biggl( \inf_{t\in I_k} \frac{1}{|a(t)|^{2H}} \biggr) 
\biggl(\sup_{t\in I_k} \Var(x_t) \biggr)^{-1} \biggr\}\;.
\end{align}
To obtain the last line, we have applied Theorem~\ref{gaussineqforHolderCont} to $x_t$
with $\gamma=2H$ and $G$ of order $\sigma^2/\varepsilon^{2H}$, which is justified for $H>\frac12$ by~\cite[Theorem 3.7]{EKN}. For $H<\frac12$ a computation similar to the one in Lemma~\ref{variance} entails the mean-square H\"older continuity of the non-autonomous fractional Ornstein--Uhlenbeck process with the same coefficients $\gamma=2H$ and $G$ of order $\sigma^2/\varepsilon^{2H}$.
Now we observe that, setting 
\begin{equation}
 \hat v(t) = \frac{2H\Gamma(2H)}{|a(t)|^{2H}}\;,
\end{equation} 
\eqref{estimate:variance} implies $\Var(x_t) \leq \sigma^2 \hat v(t) (1 + \mathcal{O}(\eps))$
for all $t\in[0,T]$. The result thus follows from the regularity properties~\eqref{eq:assump_a} of $a$ 
and the fact that the length of $I_k$ is bounded by $\eps$.
\end{proof}

The proposition shows that as soon as $h \gg \sigma$, the sample paths $(x_t)_{t\in[0,T]}$ are unlikely 
to leave a strip of width $h|a(t)|^{-H}$ before time $T$. In other words, we obtain a \lq\lq confidence 
strip\rq\rq\ for these sample paths. 

\begin{remark}
Instead of using a partition of spacing $\eps$, one could choose a partition given by $t_k = k\delta$
for an arbitrary $\delta\in(0,T]$. This yields an extra factor $\eps/\delta$ in the prefactor $C$, 
and an additional error term of order $\delta$ in the exponent. Taking $\delta < \eps$ is not of interest, 
since it increases the prefactor while it does not improve the exponent. Otherwise, the optimal value of $\delta$ has order $\sigma^2/h^2$, and yields a prefactor of order $\eps T^2(h/\sigma)^{2+1/H}$.  Because of the condition $\delta \geq \eps$, this is only of interest if $h^2 < \sigma^2/\eps$. 
\end{remark}

\subsection{Nonlinear case}

This result can now easily be extended to concentration estimates 
for sample paths near stable slow manifolds of non-linear slowly time-dependent fractional SDEs. 
We consider the equation 
\begin{equation}
\label{eq:SDE_nonlinear} 
 \6x_t = \frac{1}{\eps} f(t,x_t) \6t 
 + \frac{\sigma}{\eps^H} \6W^H_t\;,
\end{equation} 
where $f:[0,T]\times\R\to\R$ is of class $\mathcal{C}^2$. Assume that $x^\star:[0,T]\to\R$ 
is a stable uniformly hyperbolic slow manifold (or stable equilibrium branch), meaning that 
\begin{itemize}
\item   $f(t,x^\star(t)) = 0$ for all $t\in[0,T]$,
\item   and there exists $a_0 > 0$ such that $a^\star(t) = \partial_x f(t,x^\star(t)) \leq -a_0$
for all $t\in[0,T]$. 
\end{itemize}
Note that by the implicit function theorem, such a function $x^\star$ is of class $\mathcal{C}^2$ as well. Examples of nonlinear terms are given by stable cubic nonlinearities such as $f(x)=x-x^3+a(t)$, for a smooth bounded time-dependent function, or the Stommel-Cessi model for the Atlantic Meridional Overturning Circulation~\cite{Stommel,Cessi} investigated in~\cite{EKN} and~\cite{LKN}.
Classical results by Tihonov~\cite{Tihonov} and Fenichel~\cite{Fenichel} ensure that for sufficiently small $\eps$, the deterministic equation $\eps\dot x = f(t,x)$ admits a particular solution $\bar x(t)$ satisfying $\bar x(t) = x^\star(t) + \mathcal{O}(\eps)$ uniformly in $t\in[0,T]$. 
We set 
\begin{equation}
 \bar a(t) = \partial_x f(t, \bar x(t)) 
\end{equation} 
and observe that it is bounded above by $-\bar a_0 = -a_0 + \mathcal{O}(\eps)$, which is still negative for $\eps$ small 
enough. We define the set 
\begin{equation}
 \cB(h) = \bigl\{ (t,x) \colon t\in[0,T], |x - \bar x(t)| |\bar a(t)|^H \leq h \bigr\}\;,
\end{equation} 
which is a strip of width $h|\bar a(t)|^{-H}$ around the graph of $\bar x$, and write $\tau_{\cB}(h)$ 
for the first-exit time of $(x_t)_t$ from $\cB(h)$. Then we have the following concentration result, 
which is the main result of this section. 

\begin{theorem}[Concentration estimate for the nonlinear SDE]
\label{c:sde:nl}
There exist $\eps_0, h_0 > 0$ such that for $\eps\leq\eps_0$ and $h\leq h_0$, the solution of~\eqref{eq:SDE_nonlinear} with initial condition $x_0 = \bar x(0)$ satisfies 
\begin{equation}
 \fP\bigl\{ \tau_{\cB(h)} \leq T \bigr\}
 \leq C\Bigl(T; \frac{h}{\sigma}, \bar a_0\Bigr)
 \exp\biggl\{ -\kappa(\eps)\frac{h^2}{2\sigma^2}[1 - \cO(h)]\biggr\}\;,
\end{equation} 
where the constants $C(T;\frac{h}{\sigma},\bar a_0)$ and $\kappa(\varepsilon)$ are the same as in Proposition~\ref{prop:lin:stable}.
\end{theorem}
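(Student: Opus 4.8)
The plan is to reduce the nonlinear problem to the linear concentration estimate of Proposition~\ref{prop:lin:stable} via a \emph{pathwise} comparison argument. First I would pass to the deviation $y_t = x_t - \bar x(t)$ from the deterministic slow solution. Since $\bar x$ solves $\eps\dot{\bar x} = f(t,\bar x)$ and the noise in~\eqref{eq:SDE_nonlinear} is additive, subtracting the two equations gives
\begin{equation}
 \6 y_t = \frac{1}{\eps}\bigl[\bar a(t)\,y_t + b(t,y_t)\bigr]\6t + \frac{\sigma}{\eps^H}\6W^H_t\;,
 \qquad b(t,y) := f(t,\bar x(t)+y) - f(t,\bar x(t)) - \bar a(t)\,y\;.
\end{equation}
Because $f$ is of class $\cC^2$ and $\bar x$ is bounded on $[0,T]$, a second-order Taylor expansion yields $|b(t,y)| \leq M y^2$ for all $|y|$ below a fixed threshold, with $M$ depending only on $f$ and its derivatives on the relevant compact set. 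I would fix $h_0$ and $\eps_0$ at the outset so that the Tihonov--Fenichel slow solution $\bar x$ exists with $\bar a(t)\leq -\bar a_0<0$, and so that the strip $\cB(h)$ stays inside the region where this Taylor bound holds.

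Next I would introduce the Gaussian process $y^0_t$ solving the linear equation~\eqref{eq:SDE_linear} with $a=\bar a$, driven by the \emph{same} realisation of $W^H$ and started at $y^0_0=0$. The crucial observation is that the additive noise cancels in the difference $z_t := y_t - y^0_t$, which therefore solves, for each fixed realisation, the random \emph{ordinary} differential equation $\eps\dot z_t = \bar a(t)z_t + b(t,y_t)$ with $z_0=0$. Variation of constants then gives, with $\bar\alpha(t,s) := \int_s^t \bar a(u)\6u$,
\begin{equation}
 z_t = \frac{1}{\eps}\int_0^t \e^{\bar\alpha(t,s)/\eps}\,b(s,y_s)\6s\;.
\end{equation}
Up to the first-exit time $\tau_{\cB(h)}$ one has $|y_s|\leq h/|\bar a(s)|^H \leq h/\bar a_0^H$, so $|b(s,y_s)|\leq M h^2/\bar a_0^{2H}$; bounding the exponential by means of $\bar a(s)\leq -\bar a_0$ yields $|z_t|\leq C_1 h^2$ for all $t\leq \tau_{\cB(h)}\wedge T$, where $C_1$ depends only on $\bar a_0$, $H$ and $M$. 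Multiplying by the bounded weight $|\bar a(t)|^H$ gives $|z_t|\,|\bar a(t)|^H \leq C_2 h^2$ on the same event.

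Finally I would convert the exit event for $y$ into an exit event for the Gaussian process $y^0$. On $\{\tau_{\cB(h)}\leq T\}$, continuity forces $|y_{\tau_{\cB(h)}}|\,|\bar a(\tau_{\cB(h)})|^H = h$ at the exit time, whence
\begin{equation}
 |y^0_{\tau_{\cB(h)}}|\,|\bar a(\tau_{\cB(h)})|^H
 \geq |y_{\tau_{\cB(h)}}|\,|\bar a(\tau_{\cB(h)})|^H - |z_{\tau_{\cB(h)}}|\,|\bar a(\tau_{\cB(h)})|^H
 \geq h(1 - C_2 h)\;.
\end{equation}
Consequently $\fP\{\tau_{\cB(h)}\leq T\}\leq \fP\{\sup_{0\leq t\leq T}|y^0_t|\,|\bar a(t)|^H \geq h(1-C_2 h)\}$, and Proposition~\ref{prop:lin:stable}, applied with $a=\bar a$, $a_0=\bar a_0$ and threshold $h(1-C_2 h)$, yields the claimed bound: since $h(1-C_2 h)\leq h$ the prefactor is dominated by $C(T;\frac{h}{\sigma},\bar a_0)$, while $[h(1-C_2 h)]^2 = h^2[1-\cO(h)]$ reproduces the factor $[1-\cO(h)]$ in the exponent. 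The main obstacle is comparatively mild: the pathwise cancellation of the noise is exactly what turns $z$ into an ordinary integral, so no stochastic calculus is needed, and the only genuinely delicate point is carrying the correct power of $h$ through the weighted Duhamel estimate uniformly in $t$ up to the stopping time; the remaining work is bookkeeping of constants and noting that the Gaussianity and mean-square H\"older continuity required to invoke Proposition~\ref{prop:lin:stable} hold for $y^0$ precisely because it is the linear process already treated there.
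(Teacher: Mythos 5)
Your proposal is correct and follows essentially the same route as the paper: the same decomposition of $y_t = x_t - \bar x(t)$ into the linear (Gaussian) part $y^0_t$ and the Duhamel remainder (your $z_t$, the paper's $y^1_t$), the same deterministic bound $|z_t|\,|\bar a(t)|^H = \cO(h^2)$ up to the stopping time, and the same reduction to Proposition~\ref{prop:lin:stable} at the slightly reduced threshold $h[1-\cO(h)]$. The only cosmetic difference is that you conclude via event inclusion (triangle inequality at the exit time) whereas the paper uses a union bound over a split $h = h^0 + h^1$ in which the second probability vanishes identically; these are equivalent.
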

\begin{proof}
The difference $y_t = x_t - \bar x(t)$ satisfies the SDE 
\begin{equation}
 \6y_t = \frac{1}{\eps} \bigl[ \bar a(t) + b(t,y_t) \bigr] \6t 
 + \frac{\sigma}{\eps^H} \6W^H_t\;,
\end{equation} 
for a function $b$ such that $|b(t,y)| \leq My^2$ for constants $M, d > 0$, whenever 
$t\in[0,T]$ and $|y| \leq d$. 
Its solution can be represented as $y_t = y^0_t + y^1_t$, where
\begin{equation}
 y^0_t = \frac{\sigma}{\eps^H} \int_0^t \e^{\alpha(t,s)/\eps}\6W^H_s\;, 
 \qquad
 y^1_t = \frac{1}{\eps} \int_0^t \e^{\alpha(t,s)/\eps} b(s,y_s) \6s\;.
\end{equation} 
For any decomposition $h = h^0 + h^1$ with $h^0, h^1 > 0$, continuity of sample 
paths allows us to write 
\begin{align}
\fP\bigl\{ \tau_{\cB(h)} \leq T \bigr\}
&=  \fP\biggl\{ \sup_{0\leq t\leq \tau_{\cB(h)}} |y_t| |\bar a(t)|^H \geq h \biggr\} \\
&\leq \fP\biggl\{ \sup_{0\leq t\leq \tau_{\cB(h)}} |y^0_t| |\bar a(t)|^H \geq h^0 \biggr\}
+ \fP\biggl\{ \sup_{0\leq t\leq \tau_{\cB(h)}} |y^1_t| |\bar a(t)|^H \geq h^1 \biggr\}\;.
\label{eq:decomp_SDE} 
\end{align}
Since 
\begin{equation}
 |y^1_{t\wedge\tau_\cB(h)}| \leq \frac{1}{\eps} \int_0^t \e^{\alpha(t,s)/\eps} 
 \frac{Mh^2}{|\bar a(s)|^H} \6s
 \leq \frac{Mh^2}{\bar a_0^{1+H}}\;, 
\end{equation} 
the second term on the right-hand side of~\eqref{eq:decomp_SDE} vanishes for an $h^1$ of 
order $h^2$. The result then follows from Proposition~\ref{prop:lin:stable}, taking 
$h^0 = h - h^1 = h[1 - \cO(h)]$. 
\end{proof}

\section{The SPDE case}
\label{sec:SPDE} 

\subsection{Linear case} 

We now turn to the analysis of linear SPDEs on the one-dimensional torus $\T$, of the form  
\begin{equation}
\label{spde}
 \6\phi(t,x) = \frac{1}{\eps} [\Delta + a(t)] \phi(t,x)\6t + \frac{\sigma}{\eps^H} \6W^H(t,x)\;,
\end{equation}
where $a:[0,T]\to\R$ satisfies again~\eqref{eq:assump_a}. The SPDE is driven by a cylindrical fractional Brownian motion $(W^H(t))_{t\geq 0}$ with Hurst parameter $H\in(\frac14,1)$. This means that the noise is fractional-in-time and white-in-space. The existence of mild solutions for such linear SPDEs 
was established in~\cite[Example 3.1]{DuncanMaslowski} for $H\in(\frac14,1)$. 
 
The $k$th Fourier component of $\phi$ satisfies 
\begin{equation}
 \6\phi_k(t) = -\frac{1}{\eps} \lambda_k(t) \phi_k(t)\6t + \frac{\sigma}{\eps^H} \6W^H_k(t)\;, 
\end{equation} 
where $\lambda_k(t)$ is the $k$th eigenvalue of $-\Delta - a(t)$ given by $\lambda_k(t)=(2\pi)^2 k^2 - a(t)$. It has the order $\jbrak{k}^2$, where $\jbrak{k} = \sqrt{1 + k^2}$, and satisfies 
$|\lambda_k(t)|\geq (2\pi)^2 k^2 +a_0$. 

\begin{remark}
Rescaling time as $t=\mu_k \tilde{t}$, where $\mu_k = (a_0+ck^2)^{-1}$ for a constant $c>0$, we obtain
\begin{equation}
\label{fourier:rescaled}
\6 \phi_k = -\frac{1}{\varepsilon} \tilde\lambda_k(\tilde{t})\phi_k(\tilde{t})\6 \tilde{t} + \frac{\sigma \mu_k^H}{\varepsilon^H}\6 W^H_k (\tilde{t})
\end{equation}
with $\tilde t\in[0, T/\mu_k]$, 
where $\tilde\lambda_k(\tilde{t}) = \mu_k\lambda_k(\tilde{t}/\mu_k)$, and consequently 
$\tilde\lambda_k(\tilde{t})\geq 1$ and $|\tilde\lambda_k'(\tilde{t})| \leq a_1$ for all $\tilde{t}\in[0,T/\mu_k]$. This allows us to use Proposition~\ref{prop:lin:stable} with values of $a_0$ and $a_1$ that do not depend on $k$. Therefore, we need not worry about a possible $k$-dependence of the error term 
$r_2\eps$ in~\eqref{eq:factors_linear_stable}.
\end{remark}

We recall that the (fractional) Sobolev norm on $H^s(\T)$ for $s>0$ is given by
\begin{equation}\label{fr:sobolev}
\|\phi(t,\cdot)\|_{H^s}^2 =\sum\limits_{k\in\Z} \jbrak{k}^{2s}|\phi_k(t)|^2\;.
\end{equation}
While one can work with this norm, it turns out that we can obtain slightly sharper 
bounds using a time-dependent Sobolev norm defined by
\begin{equation}
\|\phi(t,\cdot)\|_{s,t}^2 := \sum\limits_{k\in\Z} a_{k,s}(t)^2|\phi_k|^2\;,  
\end{equation}
where we will choose $a_{k,s}(t)=\lambda^H_k(t)\jbrak{k}^{s-2H}$, so that 
$a_{k,s}(t)\asymp \jbrak{k}^s$. Note that both norms are equivalent, and are not sensitive to 
Fourier modes with large $|k|$. However, the time-dependent norm will give a sharper control 
for Fourier modes with small $|k|$, and especially for $k = 0$.  

\begin{proposition}[Concentration estimate for the linear SPDE]
\label{c:spde}
Let $H\in(\frac14,1)$ and $s\in(0,2H-\frac12)$. Then there exist constants $c_0, c_1, r_2>0$ such that 
the solution of~\eqref{spde} with initial condition $\phi(0,\cdot) = 0$ satisfies the concentration inequality
\begin{equation}
\fP\biggl\{ \sup_{0 \leq t \leq T} \|\phi(t,\cdot)\|_{s,t}  \geq h \biggr\} 
\leq C\Bigl(T;\frac{h}{\sigma},s\Bigr) \exp\biggl\{ -\kappa(\varepsilon) Q(s)\frac{h^2}{2\sigma^2 }\biggr\}\;, 
\end{equation}
where $Q(s)$, defined in~\eqref{Q} below, satisfies 
$Q(s) \geq c_0(2H - \frac12 - s)$,
$\kappa(\eps)$ is the same as in~\eqref{eq:factors_linear_stable}, and 
\begin{equation}
 C\Bigl(T;\frac{h}{\sigma},s\Bigr)
 = 2K_0(2H)T^2 a_0^2 
 \Bigl(Q(s)^{1/2}\frac{h}{\sigma}\Bigr)^{1/H} 
 \bigl[1 + \cO(\e^{-c_1h^2/\sigma^2})\bigr]\;.
\end{equation} 
\end{proposition}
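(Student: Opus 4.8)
The plan is to reduce the SPDE estimate to the one-dimensional Proposition~\ref{prop:lin:stable} by working mode by mode in Fourier space. Writing the time-dependent norm as
\begin{equation}
\|\phi(t,\cdot)\|_{s,t}^2 = \sum_{k\in\Z}\jbrak{k}^{2s-4H}\bigl(\lambda_k(t)^H|\phi_k(t)|\bigr)^2\;,
\end{equation}
I would first bound the supremum of the sum by the sum of the suprema, so that the event $\{\sup_{0\le t\le T}\|\phi(t,\cdot)\|_{s,t}\geq h\}$ is contained in $\{\sum_k\jbrak{k}^{2s-4H}Y_k^2\geq h^2\}$, where $Y_k:=\sup_{0\le t\le T}\lambda_k(t)^H|\phi_k(t)|$. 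Each Fourier component $\phi_k$ solves a scalar linear fractional SDE of the form~\eqref{eq:SDE_linear} with $a$ replaced by $-\lambda_k$, so that $|a(t)|=\lambda_k(t)\geq(2\pi)^2k^2+a_0$ and $|a'(t)|\leq a_1$. Thus $\fP\{Y_k\geq h_k\}$ is exactly of the type controlled by Proposition~\ref{prop:lin:stable}, and the rescaling~\eqref{fourier:rescaled} lets me apply it with $k$-independent constants $a_0=1$, $a_1$, hence a $k$-independent exponent $\kappa(\eps)$; the rescaling replaces $T$ by $T/\mu_k$ with $\mu_k^{-1}=a_0+(2\pi)^2k^2$, so the $k=0$ contribution carries the factor $a_0^2$ appearing in $C(T;\tfrac{h}{\sigma},s)$.

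Next comes the allocation step. For any family of thresholds $(h_k)$ with $\sum_k\jbrak{k}^{2s-4H}h_k^2\leq h^2$, the pigeonhole principle gives $\{\sum_k\jbrak{k}^{2s-4H}Y_k^2\geq h^2\}\subseteq\bigcup_k\{Y_k\geq h_k\}$, and a union bound followed by Proposition~\ref{prop:lin:stable} yields
\begin{equation}
\fP\Bigl\{\sup_{0\le t\le T}\|\phi(t,\cdot)\|_{s,t}\geq h\Bigr\}\leq\sum_{k\in\Z}2K_0(2H)T^2(a_0+(2\pi)^2k^2)^2\Bigl(\frac{h_k}{\sigma}\Bigr)^{1/H}\exp\Bigl\{-\kappa(\eps)\frac{h_k^2}{2\sigma^2}\Bigr\}\;.
\end{equation}
The quantity $Q(s)$ is then chosen so that the optimal budget is $h_k^2\approx Q(s)h^2$; concretely $Q(s)$ is the reciprocal of a weighted series $\sum_k\jbrak{k}^{2s-4H}\rho_k$, which converges precisely when $4H-2s>1$, i.e.\ $s<2H-\tfrac12$. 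I would take $h_0^2=Q(s)h^2$ to produce the dominant term, with exponent $\kappa(\eps)Q(s)h^2/2\sigma^2$ and prefactor $2K_0(2H)T^2a_0^2(Q(s)^{1/2}h/\sigma)^{1/H}$, while choosing the weights $\rho_k>1$ for $k\neq0$ to grow in $|k|$, so that the corresponding thresholds $h_k^2$ both exceed $Q(s)h^2$ by a fixed multiple of $h^2$ and grow fast enough to beat the polynomial prefactor $(a_0+(2\pi)^2k^2)^2$. This makes every $k\neq0$ term exponentially small relative to the $k=0$ term, collecting them into the factor $[1+\cO(\e^{-c_1h^2/\sigma^2})]$.

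Finally, I would establish the lower bound $Q(s)\geq c_0(2H-\tfrac12-s)$ by an explicit tail estimate: setting $p=4H-2s>1$, the defining series behaves like $\sum_k\jbrak{k}^{-p}\sim 2(p-1)^{-1}=(2H-\tfrac12-s)^{-1}$ as $s\uparrow2H-\tfrac12$, so its reciprocal is bounded below by a constant multiple of $2H-\tfrac12-s$, uniformly over $s\in(0,2H-\tfrac12)$.

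The main obstacle is the allocation in the second step. There is genuine tension between three requirements: the budget constraint $\sum_k\jbrak{k}^{2s-4H}h_k^2\leq h^2$ forces the uniform choice $h_k^2=Q(s)h^2$ to be essentially saturated, yet to obtain the sharp exponent one needs $h_0^2=Q(s)h^2$ exactly, while to obtain the clean prefactor $[1+\cO(\e^{-c_1h^2/\sigma^2})]$ one needs $h_k^2$ for $k\neq0$ to exceed $Q(s)h^2$ by order $h^2$ and to grow in $|k|$. Reconciling these---by building a controlled amount of slack into the definition of $Q(s)$ through the weights $\rho_k$, while preserving both the lower bound $Q(s)\geq c_0(2H-\tfrac12-s)$ and the convergence threshold at $s=2H-\tfrac12$---is the delicate part. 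The interchange $\sup_t\sum_k\leq\sum_k\sup_t$ is lossy, and one should check it does not spoil the sharp constant; it is acceptable here precisely because the time-dependent norm is tuned through $a_{k,s}(t)=\lambda_k^H(t)\jbrak{k}^{s-2H}$ so that each mode's variance bound from~\eqref{estimate:variance} is captured mode by mode.
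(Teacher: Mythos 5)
Your proposal follows essentially the same route as the paper's proof: a mode-by-mode reduction to Proposition~\ref{prop:lin:stable} via the rescaling~\eqref{fourier:rescaled} (with the $k$-independent constants and the factor $\mu_k^{-2}$ yielding $a_0^2$ at $k=0$), a union bound over a threshold allocation constrained by $h^2=\sum_k h_k^2$, and domination of the resulting series by the $k=0$ term. The weights $\rho_k$ that you leave abstract are realized in the paper as $\rho_k=\jbrak{k}^{\eta}$ with the specific choice $\eta=2H-s-\tfrac12$, which simultaneously keeps the defining series convergent exactly for $s<2H-\tfrac12$, yields the zeta-function lower bound $Q(s)\geq c_0(2H-\tfrac12-s)$, and makes the $k\neq0$ terms exponentially small in $h^2/\sigma^2$ --- precisely resolving the tension you correctly identify as the delicate step.
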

\begin{proof}
It is known that the fractional stochastic convolution has continuous trajectories in $H^s$ 
for $s\in(0,2H-\frac12)$, see~\cite[Corollary 3.1]{DuncanMaslowski} for $H>\frac12$, 
respectively~\cite[Lemma 11.10]{DuncanMaslowski2} for $H\in(\frac14,\frac12)$. 
For any decomposition $h^2=\sum_{k\in\Z} h^2_k$ with $h_k > 0$ for all $k\in\Z$, we have 
\begin{align}
\fP\biggl\{ \sup_{0 \leq t \leq T} \|\phi(t,\cdot)\|_{s,t} \geq h \biggr\}
&= \fP\biggl\{ \sup_{0 \leq t \leq T} \sum\limits_{k\in\Z} a_{k,s}(t)^2|\phi_k(t)|^2 \geq h^2 \biggr\}\\
&\leq \sum\limits_{k\in\Z} \fP\biggl\{ \sup_{0 \leq t \leq T} |\phi_k(t)|^2\lambda_k(t)^{2H} \geq \frac{h_k^2}{\jbrak{k}^{2s-4H}} \biggr\}
\label{eq:proof_SPDE_lin1} 
\end{align}
by the choice of the time-dependent coefficients $a_{k,s}(t) =\lambda^H_k(t)\jbrak{k}^{s-2H}$. 
According to Proposition~\ref{prop:lin:stable}, we have for each component $\phi_k$ solving~\eqref{fourier:rescaled} that
\begin{align}
\fP\biggl\{ \sup_{0 \leq t \leq T} |\phi_k(t)| \lambda_k(t)^{H} 
\geq \frac{h_k}{\jbrak{k}^{s-2H}} \biggr\} 
&= \fP\biggl\{ \sup_{0 \leq \tilde{t} \leq T/\mu_k} 
|\phi_k(\tilde{t})| \frac{\tilde\lambda_k(\tilde{t})^{H}}{\mu_k^H}
\geq  \frac{h_k}{\jbrak{k}^{s-2H}} \biggr\}\\
& \leq C_k 
\exp \biggl\{ -\kappa(\varepsilon) 
\frac{h^2_k\mu_k^{2H}}{2\tilde\sigma_k^2\jbrak{k}^{2s-4H}} \biggr\}\;,
\end{align}
where $\tilde\sigma_k = \mu_k^H\sigma$ due to the scaling in~\eqref{fourier:rescaled}, and 
\begin{equation} 
C_k = C\biggl(\frac{T}{\mu_k},\frac{h_k\mu_k^H}{\tilde\sigma_k\jbrak{k}^{s-2H}},1\biggr) 
= \frac{2K_0 (2H) T^2}{\mu_k^2}
\bigg(\frac{h_k}{\sigma\jbrak{k}^{s-2H}}\bigg)^{1/H}\;.
\end{equation}
Plugging this in~\eqref{eq:proof_SPDE_lin1} and simplifying the factors $\mu_K^H$ entails 
\begin{equation}
 \fP\biggl\{ \sup_{0 \leq t \leq T} \|\phi(t,\cdot)\|_{s,t}  \geq h \biggr\} 
 \leq  \sum\limits_{k\in\Z} 
 C_k\exp\biggl\{ -\kappa(\varepsilon) \frac{h_k^2}{2\sigma^2 \jbrak{k}^{2s-4H} }\biggr\}\;.    
\end{equation}
We pick $\eta>0$ and choose 
$h^2_k=Q(s)h^2\jbrak{k}^{-(4H-2s-\eta)}$, where the condition 
$h^2=\sum_{k\in\Z} h^2_k$ imposes
\begin{equation}\label{Q} 
Q(s)^{-1} =
\sum\limits_{k\in\Z} \frac{1}{\jbrak{k}^{4H-2s-\eta}} < \infty\;. 
\end{equation}
This means that we need to have $4H-2s-\eta>1$, and since both $s$ and $\eta$ must be positive we 
obtain the restriction $H>\frac14$. The claimed lower bound on $Q(s)$ follows from the behaviour of 
Riemann's zeta function $\zeta(u)$ as $u\to 1$, choosing for instance $\eta = 2H - s -\frac12$. 
Based on this choice of the $h_k$, we further obtain 
\begin{align}
\fP\biggl\{ \sup_{0 \leq t \leq T} \|\phi(t,\cdot)\|_{s,t}  \geq h \biggr\} 
&\leq \sum\limits_{k\in\Z} C_k
\exp\biggl\{ -\kappa(\varepsilon)Q(s) 
\frac{h^2\jbrak{k}^{\eta}}{2\sigma^2  }\biggr\}\\
& \leq 2K_0(2H) T^2 \biggl(Q(s)^{1/2}\frac{h}{\sigma}\bigg)^{1/H}
\sum\limits_{k\in\Z} 
\frac{\jbrak{k}^{\eta/(2H)}}{\mu_k^2}
\exp\bigl\{-\beta \jbrak{k}^\eta\bigr\}\;,
\end{align}
where $\beta:=\kappa(\varepsilon) Q(s)\frac{h^2}{2\sigma^2}$. 
We claim that the sum over $k$ is dominated by the term $k=0$. 
In fact, by an argument similar to the one in~\cite[Theorem 2.4]{BN23}, we can bound this sum by 
\begin{equation}
 f(0)+ 2f(1) + \int_1^\infty f(x)\6 x\;, 
 \qquad\text{where}\qquad 
 f(x) := (a_0+cx^2)(1+x^2)^\gamma\e^{-\beta(1+x^2)^{\eta/2}}\;,
\end{equation} 
with an exponent 
$\gamma = \eta/(4H)$. The integral can be shown to be of order $\e^{-c\beta}$ 
with $c>1$, which yields the result. 
\end{proof}

\begin{remark}
The condition $H>\frac14$ is consistent with the solution theory for such SPDEs driven by cylindrical fractional noise which is required in order to define the stochastic convolution~\cite{DuncanMaslowski,TTV}. 
\end{remark}

\subsection{Nonlinear case}

We finally consider the nonlinear SPDE given by 
\begin{equation}
\label{spde:nonlinear}
 \6\phi(t,x) = \frac{1}{\eps} [\Delta \phi(t,x) + f(t,\phi(t,x))]\6t+ \frac{\sigma}{\eps^H} \6W^H(t,x)\;.
\end{equation}
We assume $f(t,\phi)=-\partial_\phi U(t,\phi)$, where the potential $U$ can be decomposed as $U(t,\phi)=P(t,\phi)+g(t,\phi)$. Here $P$ is polynomial of even degree $2p$ with smooth bounded coefficients such that the leading order coefficient $a_{2p}(t)>0$ for all $t\in[0,T]$, and the function $g\in \cC^2([0,T]\times \R;\R)$ satisfies the boundedness assumptions 
\begin{equation}
|g(t,\phi)\phi^{-1}|,\;
|\partial_\phi g(t,\phi)|,\;
|\partial_{\phi\phi}g(t,\phi)|,\; 
|\partial_t g(t,\phi)|\leq \widetilde{M}
\end{equation} 
for all $(t,\phi)\in[0,T]\times \R$, for a constant $\widetilde{M}>0$. 

\begin{remark}
The well-posedness of semilinear SPDEs with dissipative drift terms was established for $H<1/2$ in~\cite[Theorem 3.6]{DMD3} and for $H>1/2$ in~\cite[Theorem 3.7]{DMD3}. Provided that the trace of the covariance operator $\mathcal{K}$ of the fractional Brownian motion is finite, a mild solution exists for all ranges of $H\in(0,1)$. In our case, we assume that the noise is fractional in time and white in space, i.e. $\mathcal{K}=\text{Id}$, and need the restriction $H>1/4$. This is required in order to define the stochastic convolution, i.e.~the mild solution of the linear equation. The proof in the semilinear case relies on a standard fixed-point argument in $C([0,T];L^2(\T))$, subtracting the stochastic convolution and using that this belongs to $C([0,T];D((-\Delta)^q))$ for $0\leq q<H$ according to~\cite[Proposition 2.6]{DMD3}. 
A sketch of this argument was provided for stable cubic nonlinearities in~\cite[Section 3]{BlBl23}. \\
The well-posedness result is applicable  to drift terms of the form $U(t,\phi)=P(t,\phi)+g(t,\phi)$, due to the smoothness and boundedness assumptions on $g$. 
\end{remark}

As before, we assume that we are in a stable situation, meaning that there exists a map $\phi^*:[0,T]\to \R$ such that 

\begin{itemize}
\item $f(t,\phi^*(t))=0$ for all $t\in[0,T]$,
\item there exists $a_0>0$ such that  $a^*(t)=\partial_\phi f(t,\phi^*(t))\leq -a_0$ for all $t\in[0,T]$. 
\end{itemize}

Similarly to the finite-dimensional case, the deterministic PDE admits, according to~\cite[Proposition 2.3]{BN23}, a solution $\bar\phi$ such that $\bar\phi(t)=\phi^*(t)+\cO(\varepsilon)$ for all $t\in[0,T]$. We set as before
$\bar a(t) = \partial_x f(t, \bar \phi(t))$  
and introduce for $s\in(0,2H-\frac12)$
\begin{equation}
\cB(h):= \bigl\{ (t,\phi): t\in[0,T], \|\phi(t,\cdot)-\bar\phi (t,\cdot)\|_{s,t}\leq h \bigr\}\:. 
\end{equation}
The SPDE for the difference $\psi(t,x):=\phi(t,x)-\bar{\phi}(t,x)$ reads
\begin{equation}\label{SPDE:NL}
\6\psi(t,x) = \frac{1}{\eps} [\Delta \psi(t,x) + \bar{a}(t) \psi(t,x) + b(t,\psi(t,x))]\6t+ \frac{\sigma}{\eps^H} \6W^H(t,x)\;, 
\end{equation}
for a function $b$ satisfying, for constants $M,d>0$, $|b(t,\psi)|\leq M \psi^2$ and $|\partial_\psi b(t,\psi)|\leq M|\psi|$ for all $t\in[0,T]$ and $\psi\in\R$ with $|\psi|\leq d$.
By the variation of constants formula, its mild solution is given by 
\begin{align}
\psi(t,x) &= \frac{\sigma}{\varepsilon^H}\int_0^t \e^{\alpha(t,s)/\varepsilon} \e^{[(t-s)/\varepsilon]\Delta}~\6 W^{H}(s) +   \frac{1}{\varepsilon}\int_0^t \e^{\alpha(t,s)/\varepsilon}\e^{[(t-s)/\varepsilon]\Delta}b(s,\psi(s,x))~\6s\\
& := \psi^0(t,x) + \psi^1(t,x)\;.
\end{align}
As before, $\alpha(t,s)=\int_s^t\bar{a}(r)\6 r$, while $\e^{\cdot \Delta}$ is the heat semigroup, 
given by spatial convolution with the heat kernel. 
In order to analyze the stochastic convolution, we rely on Schauder-type estimates. 

\begin{lemma}[Schauder-type estimates]
Let $f\in H^r$ with $r\in(0,2H-\frac12)$. Then for all $q<r+2$, there exists a constant $c(q,r)>0$ such that
\begin{equation}\label{schauder}
  \| \e^{t\Delta} f\|_{H^q} \leq c(q,r) t^{-\frac{q-r}{2}} \|f\|_{H^r}\;.
\end{equation}
\end{lemma}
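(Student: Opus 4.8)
The plan is to pass to Fourier space, where $\e^{t\Delta}$ becomes a diagonal multiplier and the whole estimate reduces to an elementary pointwise bound on each mode. On the one-dimensional torus $\T$ the operator $\e^{t\Delta}$ multiplies the $k$th Fourier coefficient by $\e^{-\rho_k t}$, where $\rho_k = (2\pi)^2 k^2$ is the $k$th eigenvalue of $-\Delta$; in particular $\rho_k \asymp \jbrak{k}^2$, with $\jbrak{k}^2 \leq 2\rho_k/(2\pi)^2$ for all $|k|\geq 1$. Expanding both fractional Sobolev norms via~\eqref{fr:sobolev} gives
\begin{equation}
 \|\e^{t\Delta}f\|_{H^q}^2 = \sum_{k\in\Z} \jbrak{k}^{2q}\e^{-2\rho_k t}|f_k|^2\;,
 \qquad
 \|f\|_{H^r}^2 = \sum_{k\in\Z} \jbrak{k}^{2r}|f_k|^2\;,
\end{equation}
so it suffices to establish the mode-wise inequality $\jbrak{k}^{2(q-r)}\e^{-2\rho_k t}\leq c(q,r)^2\, t^{-(q-r)}$ for every $k\in\Z$; summing this against $\jbrak{k}^{2r}|f_k|^2$ and taking square roots then yields~\eqref{schauder}.

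First I would record that for $q=r$ the multiplier satisfies $\e^{-\rho_k t}\leq 1$, so the claim is immediate with $c(r,r)=1$, the substance lying in the smoothing range $q>r$. Setting $m:=q-r>0$, I use the elementary maximisation $\sup_{x\geq 0} x^m \e^{-bx} = (m/(b\e))^m$. Applied with $x=\rho_k$ and $b=2t$, together with $\jbrak{k}^2 \leq 2\rho_k/(2\pi)^2$ on the non-zero modes, this gives
\begin{equation}
 \jbrak{k}^{2m}\e^{-2\rho_k t}
 \leq \Bigl(\frac{2}{(2\pi)^2}\Bigr)^{m} \rho_k^{m}\e^{-2\rho_k t}
 \leq \Bigl(\frac{2}{(2\pi)^2}\Bigr)^{m}\Bigl(\frac{m}{2\e}\Bigr)^{m} t^{-m}\;,
\end{equation}
which is exactly the desired pointwise bound for $|k|\geq 1$. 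The zero mode $k=0$ must be treated by hand: there $\e^{t\Delta}$ acts as the identity ($\rho_0=0$), so $\jbrak{0}^{2m}\e^{-2\rho_0 t}=1$, and the inequality $1\leq c(q,r)^2 t^{-m}$ holds on a finite interval $t\in(0,T]$ as soon as $c(q,r)^2\geq T^{m}$. Taking $c(q,r)$ to be the larger of the two constants so produced completes the estimate.

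The only genuine obstacle is precisely this zero mode: on the torus the heat flow preserves the spatial average, so a smoothing estimate carrying the gain $t^{-(q-r)/2}$ cannot possibly hold uniformly as $t\to\infty$, and the constant must absorb a factor $T^{(q-r)/2}$ (alternatively, one restricts to mean-zero data, for which $\rho_k\geq(2\pi)^2$ and the bound becomes uniform in $t$). I would also stress that the hypotheses $r\in(0,2H-\frac12)$ and $q<r+2$ play no role whatsoever in the pointwise computation, which is valid for any exponents with $q\geq r$: the former is inherited from the regularity of the stochastic convolution in the ambient problem, while the constraint $q-r<2$ is what later guarantees integrability of the singular kernel $t^{-(q-r)/2}$ when~\eqref{schauder} is inserted into the variation-of-constants representation of $\psi^0$ and $\psi^1$.
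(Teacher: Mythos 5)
Your proof is correct, but it takes a genuinely different route from the paper. The paper's proof is essentially a citation: it invokes the regularising property of analytic semigroups from~\cite[Theorem 6.13]{Pazy}, namely $\|\e^{t\Delta}f\|_{D((-\Delta)^{q/2})}\leq c(q,r)\,t^{-\frac{q-r}{2}}\|f\|_{D((-\Delta)^{r/2})}$, together with the identification $H^q=D((-\Delta)^{q/2})$; the restriction $q<r+2$ is inherited from that abstract statement. You instead diagonalise $\e^{t\Delta}$ in Fourier space and reduce everything to the elementary bound $\sup_{x\geq0}x^m\e^{-bx}=(m/(b\e))^m$. What your route buys is: explicit constants, no appeal to fractional-power machinery, and the correct observation that neither $r\in(0,2H-\frac12)$ nor $q<r+2$ is needed for the estimate itself (your computation works for any $q\geq r$) --- those hypotheses only matter downstream, where $t^{-\frac{q-r}{2}}$ must be integrable in Lemma~\ref{lem:psi1}. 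What the paper's route buys is brevity and independence of an explicit eigenbasis, so it transfers to settings where Fourier analysis is unavailable.

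Your treatment of the zero mode is moreover a genuine gain in precision over the paper. Since $\rho_0=0$, constants are invariant under the heat flow, so with $f\equiv1$ the claimed bound forces $1\leq c(q,r)\,t^{-\frac{q-r}{2}}$, which fails as $t\to\infty$: on $\T$ the estimate cannot hold with a time-independent constant on an unbounded interval, exactly as you say. The paper's citation glosses over this (the uniform-in-$t$ version of the Pazy estimate requires $0$ to lie in the resolvent set of $-\Delta$, which fails on the torus), and the point is not academic here, because in the proof of Lemma~\ref{lem:psi1} the semigroup is evaluated at times $(t-s)/\eps$ up to $T/\eps$, which is unbounded as $\eps\to0$. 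The conclusion there survives because the zero-mode decay is supplied by the separate factor $\e^{-a_0(t-s)/\eps}$ (equivalently, one applies~\eqref{schauder} only to the mean-zero part, for which your argument gives a constant uniform in $t$), but your remark correctly identifies that the lemma as stated is slightly too strong, and how to repair it.
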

\begin{proof}
This follows from regularizing properties of analytic semigroups~\cite[Theorem 6.13]{Pazy},
according to which
\begin{equation} 
\|\e^{t\Delta} f\|_{D((-\Delta)^{q/2})} \leq c(q,r) t^{-\frac{q-r}{2}}\|f\|_{D((-\Delta)^{r/2})}
\end{equation}
for $q-r/2<1$, leading to the restriction $q<r+2$. The statement follows, considering that $H^q=D((-\Delta)^{q/2})$. Here we use the equivalence of the time-dependent (fractional) Sobolev norm with~\eqref{fr:sobolev}. 
\end{proof}
For $\psi(t,\cdot)\in H^s$, one can easily prove that $\beta(t):=b(t,\psi(t,\cdot))$ belongs to $H^s$. A proof of this statement in $H^s$ for $s\in(0,\frac12)$ relying on Young's inequality is provided in~\cite[Lemma 3.4]{BN23}. We now apply the Schauder estimate in order to obtain a bound on $\psi^1(t,\cdot)$ similar to~\cite[Corollary 3.6]{BN23}.

\begin{lemma}
\label{lem:psi1} 
Assume that there exists $r\in(0,2H-\frac12)$ such that $\beta(t)\in H^r$ for all $t\in[0,T]$. Then for all $q<r+2$ there exists a constant $c'(q,r)>0$ such that for all $t\in[0,T]$ we have
\begin{equation}  
\| \psi^{1}(t,\cdot) \|_{H^q} \leq c'(q,r) \varepsilon^{\frac{q-r}{2}-1}\sup\limits_{0\leq s\leq t} \|\beta(s)\|_{H^r}\;.
\end{equation}
\end{lemma}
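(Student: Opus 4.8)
The plan is to start from the variation-of-constants representation
\[
\psi^1(t,\cdot) = \frac{1}{\eps}\int_0^t \e^{\alpha(t,s)/\eps}\,\e^{[(t-s)/\eps]\Delta}\beta(s)\6s\;,
\]
and to bound its $H^q$-norm by bringing the norm inside the integral (the triangle inequality for the Bochner integral), then estimating the two factors of the integrand separately: the scalar weight $\e^{\alpha(t,s)/\eps}$ and the regularised profile $\e^{[(t-s)/\eps]\Delta}\beta(s)$.

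First I would use that $\bar a(t)\le-\bar a_0<0$, so that $\alpha(t,s)=\int_s^t\bar a(r)\6r\le-\bar a_0(t-s)\le 0$ whenever $s\le t$, giving the crude bound $\e^{\alpha(t,s)/\eps}\le 1$. For the second factor I would apply the Schauder-type estimate~\eqref{schauder} with the elapsed time $(t-s)/\eps$ in place of $t$, which is legitimate precisely because $q<r+2$, obtaining
\[
\bigl\|\e^{[(t-s)/\eps]\Delta}\beta(s)\bigr\|_{H^q}
\le c(q,r)\Bigl(\frac{t-s}{\eps}\Bigr)^{-\frac{q-r}{2}}\|\beta(s)\|_{H^r}\;.
\]
Bounding $\|\beta(s)\|_{H^r}$ by $\sup_{0\le s\le t}\|\beta(s)\|_{H^r}$ then pulls the data out of the integral.

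It then remains to evaluate $\frac{1}{\eps}\int_0^t\bigl((t-s)/\eps\bigr)^{-\frac{q-r}{2}}\6s$. Factoring out the $\eps$-power (equivalently substituting $u=(t-s)/\eps$) gives $\eps^{\frac{q-r}{2}-1}\int_0^t(t-s)^{-\frac{q-r}{2}}\6s=\eps^{\frac{q-r}{2}-1}\frac{t^{1-(q-r)/2}}{1-(q-r)/2}$, where the integral converges exactly because $\frac{q-r}{2}<1$, i.e.\ $q<r+2$. Bounding $t\le T$ and absorbing $c(q,r)$, the factor $1-\frac{q-r}{2}$, and $T^{1-(q-r)/2}$ into a single constant $c'(q,r)$ yields the claimed estimate. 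The only genuine point to watch is the integrability of the singular kernel $(t-s)^{-(q-r)/2}$ as $s\uparrow t$, which is once more the Schauder range condition $q<r+2$; exchanging norm and integral is routine. I note that keeping the decay factor $\e^{-\bar a_0(t-s)/\eps}$ and extending the $u$-integral to infinity would instead produce a sharper, uniform-in-$\eps$ constant $\bar a_0^{(q-r)/2-1}\Gamma\bigl(1-\frac{q-r}{2}\bigr)$; since $\eps^{(q-r)/2-1}\ge 1$ for $\eps\le 1$, this also implies the stated bound.
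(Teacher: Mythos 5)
Your proof is correct and follows essentially the same route as the paper: triangle inequality for the Bochner integral, the Schauder estimate with elapsed time $(t-s)/\eps$, and the integrability of $(t-s)^{-(q-r)/2}$ granted by $q<r+2$ (the paper keeps the factor $\e^{-a_0(t-s)/\eps}$ in an intermediate line but then discards it exactly as you do). Your closing observation that retaining the exponential decay yields a uniform-in-$\eps$ constant $\bar a_0^{(q-r)/2-1}\Gamma\bigl(1-\frac{q-r}{2}\bigr)$ is a valid (and slightly sharper) refinement, but the core argument coincides with the paper's.
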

\begin{proof}
We have
\begin{align}
\| \psi^1(t,x)\|_{H^q} &\leq \frac{1}{\varepsilon} 
\int_0^t \e^{-a_0(t-s)/\varepsilon} \|\e^{[(t-s)/\varepsilon]\Delta}\beta(s)\|_{H^q}~\6s\\
& \leq c(q,r) \varepsilon^{\frac{q-r}{2}-1} 
\sup\limits_{0\leq s \leq t } \|\beta(s)\|_{H^r} \int_0^t (t-s)^{-\frac{q-r}{2}}~\6s<\infty\;, 
\end{align}
since $q<r+2$. In the last step we used the uniform negative bound on $\bar a$ and the Schauder estimate. The result follows with $c' = c \int_0^t (t-s)^{-\frac{q-r}{2}}~\6s$.
\end{proof}

\begin{theorem}[Concentration estimate for the nonlinear SPDE]
\label{thm:nonlinear_concentration} 
For every $s\in(0,2H-\frac12)$ and any $\nu>0$, there exist positive constants $\varepsilon_0, h_0$ 
such that for $\eps\leq\eps_0$ and $h\leq h_0 \varepsilon^\nu$, the solution of~\eqref{SPDE:NL} with initial condition $\phi(0,\cdot) = \bar \phi(0,\cdot)$ satisfies 
\begin{equation}
 \fP\bigl\{ \tau_{\cB(h)} \leq T \bigr\}
 \leq C\Bigl(T; \frac{h}{\sigma}, s\Bigr)
 \exp\biggl\{ -\kappa(\eps)Q(s)\frac{h^2}{2\sigma^2}
 \bigg[1 - \cO\Big(\frac{h}{\varepsilon^\nu}\Big)\bigg]\biggr\}\;, 
\end{equation}
with the same $C\bigl(T; \frac{h}{\sigma}, s\bigr)$, $Q(s)$ and $\kappa(\eps)$ as in 
Proposition~\ref{c:spde}. 
\end{theorem}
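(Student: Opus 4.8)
The plan is to follow the proof of Theorem~\ref{c:sde:nl} almost line by line, replacing the scalar linear estimate by the linear SPDE concentration bound of Proposition~\ref{c:spde} and absorbing the nonlinearity into a deterministic remainder controlled by the Schauder-type estimate of Lemma~\ref{lem:psi1}. Working with $\psi=\phi-\bar\phi$ solving~\eqref{SPDE:NL}, I would use the mild-solution splitting $\psi=\psi^0+\psi^1$ already introduced. The stochastic convolution $\psi^0$ is exactly the mild solution of the linear SPDE~\eqref{spde} with drift coefficient $\bar a$ in place of $a$, so Proposition~\ref{c:spde} applies to it directly: the $\cO(\eps)$ difference between $\bar a$ and $a^\star$ only perturbs the constants $a_0,a_1$, which by the rescaling remark may be taken independent of the Fourier mode $k$.

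For any decomposition $h=h^0+h^1$ with $h^0,h^1>0$, continuity of the trajectories in $H^s$ for $s\in(0,2H-\frac12)$ --- guaranteed by the references quoted in the proof of Proposition~\ref{c:spde} --- allows me to bound, exactly as in~\eqref{eq:decomp_SDE},
\[
\fP\{\tau_{\cB(h)}\leq T\}\leq \fP\Bigl\{\sup_{0\leq t\leq\tau_{\cB(h)}}\|\psi^0(t,\cdot)\|_{s,t}\geq h^0\Bigr\}+\fP\Bigl\{\sup_{0\leq t\leq\tau_{\cB(h)}}\|\psi^1(t,\cdot)\|_{s,t}\geq h^1\Bigr\}.
\]
Proposition~\ref{c:spde} bounds the first term by $C(T;\tfrac{h}{\sigma},s)\exp\{-\kappa(\eps)Q(s)(h^0)^2/(2\sigma^2)\}$. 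The point is then to choose $h^1$ just large enough that the second term vanishes identically, while keeping $h^0=h-h^1=h[1-\cO(h/\eps^\nu)]$, so that squaring $h^0$ reproduces the correction factor $[1-\cO(h/\eps^\nu)]$ in the exponent and the stated prefactor up to the harmless factor $[1+\cO(\e^{-c_1h^2/\sigma^2})]$.

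To make the second probability vanish I would bound $\psi^1$ deterministically up to the first-exit time. Before $\tau_{\cB(h)}$ one has $\|\psi(t,\cdot)\|_{s,t}\leq h$; the quadratic estimates $|b(t,\psi)|\leq M\psi^2$ and $|\partial_\psi b(t,\psi)|\leq M|\psi|$, combined with the product estimate placing $\beta(t)=b(t,\psi(t,\cdot))$ in a fractional Sobolev space (via the Young-inequality argument of~\cite[Lemma 3.4]{BN23}), yield $\sup_{s\leq t}\|\beta(s)\|_{H^r}\leq Ch^2$ for a suitable $r\in(0,2H-\frac12)$. Feeding this into Lemma~\ref{lem:psi1} with $q=s$ gives a deterministic bound on $\|\psi^1(t,\cdot)\|_{s,t}$ of order $h^2$ times a power of $\eps$; choosing $h^1$ equal to a constant multiple of this bound empties the event $\{\sup\|\psi^1\|_{s,t}\geq h^1\}$, and the resulting $h^1$ is of order $h^2/\eps^\nu$, which produces $h^0=h[1-\cO(h/\eps^\nu)]$. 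The restriction $h\leq h_0\eps^\nu$ is precisely what guarantees $h^1<h$, so that the decomposition is nonempty, and keeps $\psi$ within the region $|\psi|\leq d$ where the quadratic bounds on $b$ are valid.

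The main obstacle, and the essential difference from the finite-dimensional Theorem~\ref{c:sde:nl}, lies in this last step. In the SDE case the remainder was bounded by $\cO(h^2)$ with no loss in $\eps$, because the prefactor $1/\eps$ was absorbed by the integral of $\e^{\alpha(t,s)/\eps}$. In the SPDE case the heat semigroup must be handled through the smoothing estimate~\eqref{schauder}, whose negative power of the parabolic time $(t-s)/\eps$ introduces a factor $\eps^{(s-r)/2-1}$; controlling this factor --- either by retaining the exponential decay $\e^{-a_0(t-s)/\eps}$ inside the convolution integral, or by restricting the threshold to $h\leq h_0\eps^\nu$ --- is what forces the scale-dependent constraint and the weaker correction $\cO(h/\eps^\nu)$ rather than $\cO(h)$. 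A secondary difficulty is the product estimate for $\beta$ in the genuinely low-regularity regime $s<\frac12$, where $H^s$ is not a Banach algebra and one cannot simply write $\|\beta\|_{H^s}\lesssim\|\psi\|_{H^s}^2$; here the Young-inequality argument together with the quadratic structure of $b$ must be used to place $\beta$ in an appropriate Sobolev space without destroying the gain of a factor $h^2$.
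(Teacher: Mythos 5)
Your proposal matches the paper's own proof essentially step for step: the same splitting $\psi=\psi^0+\psi^1$ with $h=h^0+h^1$, Proposition~\ref{c:spde} applied to the stochastic convolution, and Lemma~\ref{lem:psi1} giving the deterministic bound $\|\psi^1\|\leq c'(q,r)\eps^{\frac{q-r}{2}-1}Mh^2$ up to $\tau_{\cB(h)}$, with $h^1$ chosen as that bound so the second probability vanishes and $h^0=h[1-\cO(h/\eps^\nu)]$ with $\nu=1-\frac{q-r}{2}$. Your identification of the $\eps$-loss from the Schauder estimate as the source of the constraint $h\leq h_0\eps^\nu$ (absent in the SDE case) is exactly the point the paper's argument turns on, so the proposal is correct and in agreement with the paper.
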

\begin{proof}
 For any decomposition $h=h_0+h_1$ we have
\begin{align}
 \fP\bigl\{ \tau_{\cB(h)} \leq T  \bigr\}
&=  \fP\biggl\{ \sup_{0\leq t \leq \tau_{\cB(h)}} \|\psi(t,\cdot)\|_{s,t} \geq h \biggr\}\nonumber\\
& \leq \fP\biggl\{ \sup_{0\leq t\leq \tau_{\cB(h)}} \Bigl[\|\psi^0(t,\cdot)\|_{s,t} + \|\psi^1(t,\cdot)\|_{s,t}\Bigr] \geq h \biggr\}\nonumber\\
& \leq \fP\biggl\{ \sup_{0\leq t\leq \tau_{\cB(h)} } \|\psi^0(t,\cdot)\|_{s,t} \geq h_0 \biggr\} + \fP\biggl\{ \sup_{0\leq t\leq \tau_{\cB(h)}} \|\psi^1(t,\cdot)\|_{s,t} \geq h_1 \biggr\}\;.\label{pb:nl}
\end{align}
We bound the first term using Proposition~\ref{c:spde}. For the second one we have, 
using Lemma \ref{lem:psi1} and otherwise
similarly to the proof of Theorem~\ref{c:sde:nl}, that for $t\leq \tau_{\cB(h)}$ 
\begin{equation} 
\|\psi^1(t,\cdot)\|_{H^q} \leq c'(q,r) \eps^{\frac{q-r}{2}-1}Mh^2\;, 
\end{equation} 
since $\|\beta(t)\|_{H^r}\leq M\|\psi^1(t,\cdot)\|^2_{H^q}\leq M h^2$. Therefore the second term in~\eqref{pb:nl} vanishes provided that $h_1=c'(q,r)\varepsilon^{\frac{q-r}{2}-1}Mh^2$. The statement follows, choosing 
\begin{equation}
 h_0=h-h_1=h-c'(q,r) \varepsilon^{\frac{q-r}{2}-1}Mh^2=h(1-\cO(h/\varepsilon^\nu))
\end{equation} 
for $\nu=1-\frac{q-r}{2}$.
\end{proof}

%%%%%%%%%%%%%%%%%%%%%%%%%%%%%%%%%%%%%%%%%%%%%%%%%%%%%%%%%%%%%%%%%%%
%%                                 

\section*{Acknowledgements} 
We thank the referees for their valuable comments and suggestions. 
AB acknowledges support by the DFG grant 543163250, DFG CRC-TRR 388 project A06 and DFG CRC 1432 project C10.
NB was supported by the ANR project PERISTOCH, ANR--19--CE40--0023. Furthermore, 
NB thanks the Department of Mathematics and Statistics at University of Konstanz, 
and AB thanks the Institut Denis Poisson at University of Orl\'eans for hospitality 
during mutual visits. 
%%
%% Supplementary Material, if any, should be provided in         %%
%% {supplement} environment  with title and short description.   %%
%%                                                               %%
%%%%%%%%%%%%%%%%%%%%%%%%%%%%%%%%%%%%%%%%%%%%%%%%%%%%%%%%%%%%%%%%%%%

% \begin{supplement}
% \stitle{Title of Supplement A.}
% \sdescription{Short description of Supplement A.}
% \end{supplement}
% \begin{supplement}
% \stitle{Title of Supplement B.}
% \sdescription{Short description of Supplement B.}
% \end{supplement}

%%%%%%%%%%%%%%%%%%%%%%%%%%%%%%%%%%%%%%%%%%%%%%%%%%%%%%%%%%%%%%%%%%%
%%                                                               %%
%% Use the two commands below for producing your bibliography    %%
%% with bibtex, then comment again the commands and include the  %%
%% content of the .bbl file in this file below the commands.     %%
%%                                                               %%
%%%%%%%%%%%%%%%%%%%%%%%%%%%%%%%%%%%%%%%%%%%%%%%%%%%%%%%%%%%%%%%%%%%

%\bibliographystyle{amsplain}
%\bibliography{yourbibfilename}

\begin{thebibliography}{99}
\bibitem{BG02}
N.~Berglund and B.~Gentz.
\newblock Pathwise description of dynamic pitchfork bifurcations with additive noise.
\newblock {\em Probability Theory Related Fields.}
122: 341--388, 2002.

\bibitem{BN23}
N.~Berglund and R.~Nader.
\newblock Stochastic resonance in stochastic PDEs.
\newblock {\em SPDEs: Analysis and Computation.}
11(1): 348--387, 2023.

\bibitem{BN:22x}
N.~Berglund and R.~Nader.
\newblock Concentration estimates for slowly time-dependent singular SPDEs on the two-dimensional torus.
\newblock {\em Electronic Journal of Probability.}
29: 1--35 (2024). 

\bibitem{BlBl23}
A. Blessing Neam\c tu and D. Bl\"omker. Finite-time Lyapunov exponents for SPDEs with fractional noise. {\em Journal of Nonlinear Science}, 35(26), 2025.

\bibitem{NBl}
D. Bl\"omker and A. Neam\c tu. Amplitude equations for SPDEs driven by fractional additive noise with small Hurst parameter. 
\newblock {\em Stoch. Dyn.}, 22(03):2240013, 2022.

\bibitem{Cessi}
P. Cessi. A simple box model of stochastically forced thermohaline flow. 
\newblock {\em J.\ Phys.\ Oceanogr.}, 24:1911--1920 (1994).

\bibitem{D}
L. Decreusefond and A.-S.~Ustunel. Stochastic analysis of the fractional Brownian motion. {\em Potential
Anal.}, 10(2):177--214, 1999.

\bibitem{DNualart}
L. Decreusefond and D. Nualart. 
\newblock Hitting times for Gaussian processes.
\newblock {\em Ann.~Probab.}, 319--330, 2008.

\bibitem{DuncanMaslowski}
T.E.~Duncan, B.~Pasik-Duncan and B.~Maslowski.
\newblock Fractional Brownian motion and stochastic equations in Hilbert spaces. 
{\em Stoch. Dyn.}, 2(2):225--250, 2002.

\bibitem{DuncanMaslowski2}
T.E.~Duncan, B.~Pasik-Duncan and B.~Maslowski.
\newblock Linear stochastic equations in a Hilbert space with a fractional Brownian motion.
\newblock %Book chapter (pp. 201--221) 
In  {\em Stochastic Processes, Optimization, and Control Theory: Applic.\ in Financial Eng., 
Queueing Networks, and Manufacturing Systems}, 2006.

\bibitem{DMD3}
T.E.~Duncan, B.~Pasik-Duncan and B.~Maslowski.
Semilinear stochastic equations in a Hilbert space with a fractional Brownian motion.
\newblock{\em SIAM.~J.~Math.~Anal.}, 40(6):2286--2315, 2009.

\bibitem{EKN}
K.~Eichinger, C.~Kuehn and A.~Neam\c tu.
\newblock Sample Paths Estimates for Stochastic Fast-Slow Systems driven by Fractional Brownian Motion.
\newblock {\em J. Stat. Phys}, 179(5):1222--1266, 2020.

\bibitem{Fenichel}
N. Fenichel.
\newblock Geometric Singular Perturbation Theory for Ordinary Differential Equations.
\newblock {\em Journal of Differential Equations}, 31:53--98, 1979.

\bibitem{HaLi}
M.~Hairer and X.-M. Li.
Averaging dynamics driven by fractional Brownian motion.
\newblock{\em Ann. Probab.}, 48(4):1826--1860, 2020. 

\bibitem{HaOh}
 M.~Hairer and A.~Ohasi.
 Ergodic theory for SDEs with extrinsic memory.
 \newblock{\em Ann.~Probab.}, 35(5):1950--1977, 2007. 

\bibitem{Mishura}
K.~Kubilius, Y.~Mishura and K.~Ralchenko.
\newblock {\em Parameter Estimation in Fractional Diffusion Models}, Springer 2017. 

\bibitem{LKN}
C. Kuehn, K. Lux and A. Neam\c tu. 
\newblock Warning Signs for Non-Markovian Bifurcations: Color Blindness and Scaling Laws. 
\newblock {\em Proc.~Royal~Soc.~A}, 478:20210740, 2022.

\bibitem{Olver}
F.W.J. Olver.
\newblock {\em Asymptotics and Special Functions}.
\newblock Academic Press, 1974.

\bibitem{Pazy}
A.~Pazy. 
\newblock {\em Semigroups of Linear Operators and Applications to Partial Differential Equations}.
Springer Applied Mathematical Series. Springer–Verlag, Berlin, 1983.

\bibitem{P}
V.I. Piterbarg. \newblock {\em Asymptotic Methods in the Theory of Gaussian Processes and Fields}. \newblock American Mathematical Society, 1996.

\bibitem{Rogers}
L.C.G.~Rogers. \newblock{\em Arbitrage with Fractional Brownian Motion}.
\newblock Wiley, 2002.

\bibitem{Slepian}
 D. Slepian. 
\newblock The one-side barrier problem for Gaussian noise.
\newblock {\em Bell System Tech.}, 41(2):436--
501, 1962.

\bibitem{Stommel}
H. Stommel.
\newblock Thermohaline convection with two stable regimes of flow.
\newblock {\em Tellus}, 13:224--230, 1961.

\bibitem{Tihonov}
A.N.~Tihonov. 
\newblock  Systems of differential equations containing small parameters in the derivatives.
\newblock  {\em Mat.\ Sbornik N. S.}, 31:575--586, 1952.

\bibitem{TTV}
S.~Tindel, C.A.~Tudor and F.~Viens.
\newblock Stochastic evolution equations
with fractional Brownian motion.
\newblock {\em Probab. Theory. Rel. Fields}, 127:186--204, 2003.

\bibitem{Weiss13}
M. Weiss. 
\newblock Single-particle tracking data reveal anticorrelated fractional Brownian motion in
crowded fluids. 
\newblock {\em Phys. Rev. E}, 88(1):010101, 2013.

\end{thebibliography}

% add below the content of your .bbl file produced by bibtex.

\bigskip\bigskip\noindent
% \begin{minipage}[t]{.5\textwidth}
\small
\textbf{Nils Berglund} \\
Institut Denis Poisson (IDP) \\ 
Universit\'e d'Orl\'eans, Universit\'e de Tours, CNRS -- UMR 7013 \\
B\^atiment de Math\'ematiques, B.P. 6759\\
45067~Orl\'eans Cedex 2, France \\
{\it E-mail address: }
{\tt nils.berglund@univ-orleans.fr} 
% \end{minipage}%
%
\hspace{2em}

\bigskip\noindent 
% \begin{minipage}[t]{.45\textwidth}
\small
\textbf{Alexandra Blessing~(Neam\c tu)} \\
Department of Mathematics and Statistics \\ 
University of Konstanz \\
Germany \\
%United Kingdom \\
{\it E-mail address: }
{\tt alexandra.blessing@uni-konstanz.de}
% \end{minipage}

\end{document}